\providecommand{\U}[1]{\protect\rule{.1in}{.1in}}
\def\vs{\vskip.3cm}
\def\br{\mathbb R}
\def\bz{\mathbb Z}
\def\bc{\mathbb C}
\def\wt{\widetilde}
\def\id{\text{\rm Id\,}}
\def\cV{\mathcal V}
\def\noi{\noindent}
\def\gdeg{G\text{\rm -deg}}
\def\vp{\varphi}
\newcommand{\amal}[5]{#1\prescript{#4}{}\times^{#5}_{#3}#2}
\newcommand{\eqdeg}[1]{#1\mbox{\rm -}\deg}
\newtheorem{theorem}{Theorem}[section]
\newtheorem{remark}[theorem]{Remark}
\newtheorem{remark-definition}[theorem]{Remark and Definition}
\font\smc=cmcsc10
\definecolor{mygreen}{rgb}{0,.66,.05}
\definecolor{lightyellow}{rgb}{1,1,.80}
\def\cV{\mathcal{V}}
\newcolumntype{Y}{>{\raggedleft\arraybackslash}X}
\begin{document}

\title{Global Nonlinear Normal Modes in\\ the Fullerene Molecule $C_{60}$}
\author{ Carlos Garc\'{\i}a-Azpeitia\thanks{
Departamento de Matem\'{a}ticas, Facultad de Ciencias, Universidad Nacional
Aut\'{o}noma de M\'{e}xico, 04510 M\'{e}xico}, Wieslaw Krawcewicz\thanks{Center for Applied Mathematics at Guangzhou University, Guangzhou, 510006 China. } \footnotemark[4]
\and Manuel Tejada-Wriedt\footnotemark[1] \thanks{Laboratorios de Nanomedicina. Nano Tutt S.A. de C.V. Tripoli 803, Portales, 03300, Mexico City, Mexico} , Haopin Wu\thanks{ Department of Mathematical Sciences University of Texas at Dallas Richardson, 75080 USA }} 
\maketitle

\begin{abstract}

\noi In this paper we analyze nonlinear dynamics of  the fullerene molecule. We prove 
the  existence of global branches of periodic solutions  emerging from an icosahedral equilibrium   (nonlinear normal modes). We also determine the symmetric properties of the
branches of nonlinear normal modes for maximal orbit types.
 We find several 
solutions  which are standing and rotating waves that propagate along the
molecule with icosahedral, tetrahedral, pentagonal and triangular symmetries.
  We complement our theoretical results with numerical computations using Newton's method.

\end{abstract}

\section{Introduction}\label{sec:1}

The fullerene molecule was discovered in 1986 by R. Smalley, R. Curl, J.
Heath, S. O'Brien, and H. Kroto, and since then continues to attract great
deal of attention in the scientific community (e.g. the original work
\cite{Kr} has currently 15 thousand citations). Its importance led in 1996 to
awarding the Nobel prize in chemistry to Smalley, Curl and Kroto. Since
its discovery, applications of fullerene C60 have been extensively explored in biomedical research due to their unique structure and physicochemical properties.

\begin{figure}[ptb]
\begin{center}
\resizebox{11cm}{!}{\includegraphics{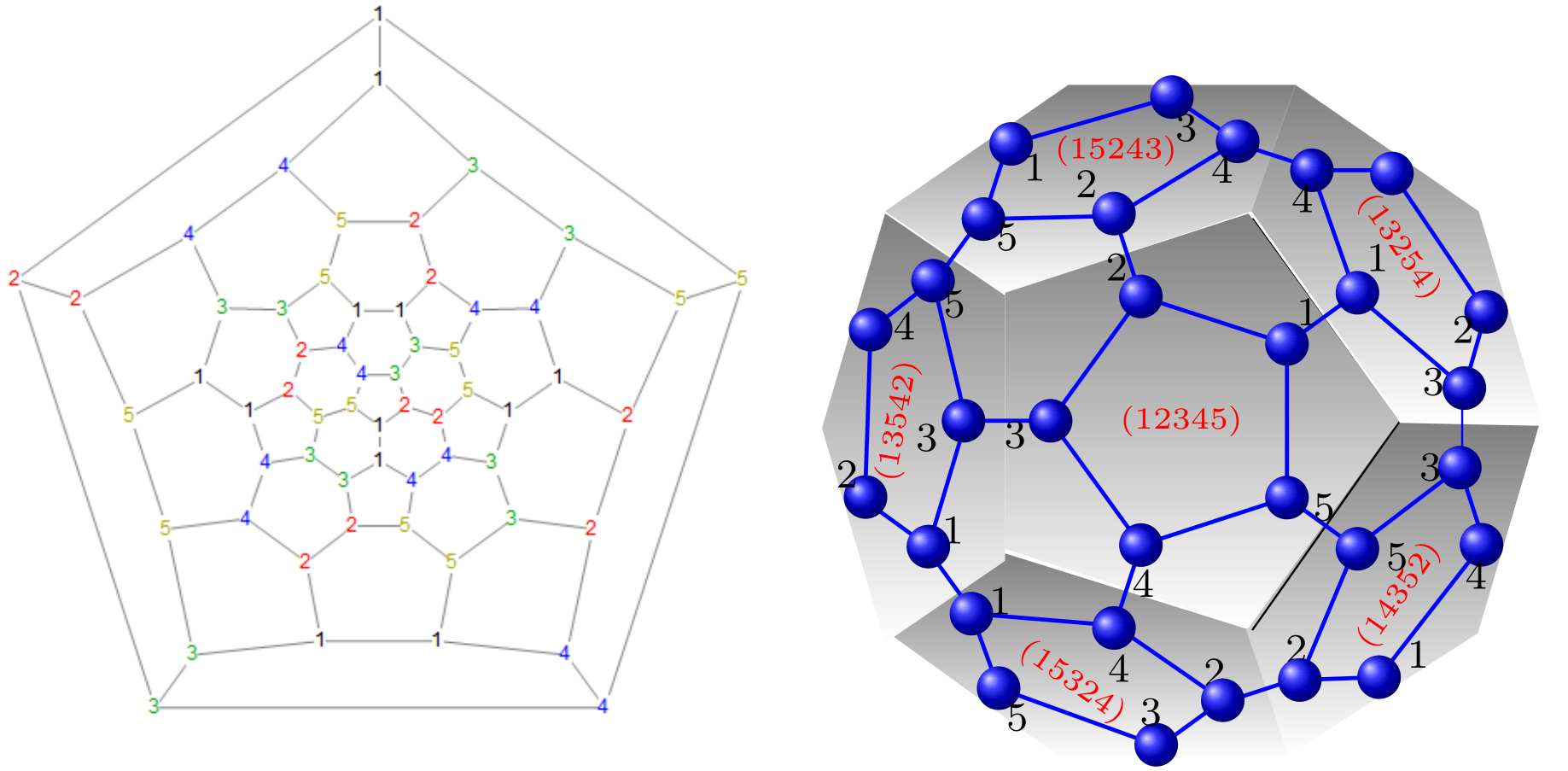} } 
\end{center}
\caption{The fullerene molecule}%
\label{fig:fullerene}%
\end{figure}

The fullerene molecule is composed of $60$ carbon atoms at the vertices of a
truncated icosahedron (see Figure \ref{fig:fullerene}). Various mathematical
models for the fullerene molecule are built in the framework of classical
mechanics. Many force fields have been proposed for the fullerene in terms of
bond stretching, bond bending, torsion and van der Waals forces. Some force
fields were optimized to duplicate the normal modes obtained using IR or Raman
spectroscopy, but only few of these models reflect the nonlinear
characteristics of the fullerene. For instance, the force field implemented in
\cite{Wa} for carbon nanotubes and in \cite{Be} for the fullerene, assumes
bond deformations that exceed very small fluctuations about equilibrium
states, while the force fields proposed in \cite{We} and \cite{Wu} are
designed only to consider small fluctuations in the fullerene model. Although
the linear vibrational modes of fullerene can be measured using IR and Raman
spectroscopy, increasing attention has been given to the mathematical study of
other vibrational modes that cannot be measured experimentally (see
\cite{He,Ho,Ji,Wa,We,Wu,Sc} and the large bibliography therein).

\paragraph{Mathematical Models.}

In molecular dynamics, a fullerene molecule model consists of a Newtonian
system
\begin{equation}
\label{eq:1}m\ddot u=-\nabla V(u),
\end{equation}
where the vector $u(t)\in\widetilde{\mathscr V}:=\mathbb{R}^{180}$ represents
the positions of 60 carbon atoms in space, $m$ is the carbon mass (which by
rescaling the model can be assumed to be one) and $V (u)$ is the energy given
by a force field. This force field is symmetric with respect to the action of the
group $\Gamma:=I\times O(3)$, where $I$ denotes the full icosahedral group.
This important property allows application of various equivariant methods to
analyze its dynamical properties. In order to make the system \eqref{eq:1}
reference point-depended, we define the subspace $\mathscr V$ of
$\mathbb{R}^{180}$ by
\begin{equation}
\label{eq:space}\mathscr V:=\{x=(x_{1},x_{2},\dots, x_{60}): x_{k}%
\in\mathbb{R}^{3},\; \sum_{k=1}^{60} x_{k}=0\},
\end{equation}
from which we exclude the collision orbits, i.e. we consider the restriction
of the system \eqref{eq:1} to the set $\Omega_{o}:=\{x\in\mathscr V:
x_{j}\not = x_{k}, \; j\not =k\}$.

\paragraph{Analysis of Nonlinear Molecular Vibrations.}

The mathematical analysis of a molecular model includes two objectives:
identification of the \textit{normal frequencies} and the classification of different
families of \textit{periodic solutions with various spatio-temporal symmetries}
emerging from the equilibrium configuration of the
molecule (nonlinear normal modes). Let us emphasize that the classification of normal modes
 is a central problem of the molecular spectroscopy. 

The study of
periodic orbits in Hamiltonian systems can be traced back to Poincare and
Lyapunov, who proved the existence of nonlinear normal modes (periodic orbits)
near an elliptic equilibrium under non-resonant conditions. Later on,
Weinstein (cf. \cite{Weinstein}) extended this result to the case with
resonances. However, in general, the existence of nonlinear normal modes is
not guaranteed under the presence of resonances. Indeed, Moser presented in
\cite{Mos} an example of a Hamiltonian systems with resonances, where the
linear system is full of periodic solutions, but the nonlinear system has
none. 

Let us point out that due to the icosahedral symmetries, the fullerene molecule
$C_{60}$ has resonances of multiplicities 3, 4 and 5, i.e. the existence of linear modes does not guarantee the
existence of nonlinear normal modes in fullerene $C_{60}$ due to resonances. Therefore, one needs
a good method that takes in consideration these symmetries. Standard methods
for such analysis may use reductions to the $H$-fixed point spaces, normal
form classification, center manifold theorem, averaging method and/or
Lyapunov--Schmidt reduction.

\paragraph{Variational Reformulation.}

The problem of finding periodic solutions for the fullerene can be
reformulated as a variational problem on the Sobolev space $H_{2\pi}%
^{1}(\mathbb{R};{\mathscr V})$ (of $2\pi$-periodic ${\mathscr V}$-valued
functions) with the functional
\[
J_{\lambda}(u):=\int_{0}^{2\pi}\left[  \frac{1}{2}|\dot{u}(t)|^{2}-\lambda
^{2}V(u(t))\right]  dt,\quad u\in H_{2\pi}^{1}(\mathbb{R};\Omega_{o}),
\]
where $V$ is the force field, $\lambda^{-1}$ the frequency and $u$ the
renormalized $2\pi$-periodic solution. The existence of periodic solutions
(with fixed frequency $\lambda^{-1}$) is equivalent to the existence of
critical points of $J_{\lambda}$. It follows from the construction of the
force field that the functional $J_{\lambda}$ is invariant under the action of
the group
\[
G:=\Gamma\times O(2)=(I\times O(3))\times O(2),
\]
which acts as permutations of atoms, rotations in space, and translations and
reflection in time, respectively.

\begin{figure}[ptb]

\begin{center}
\resizebox{7cm}{!}{\includegraphics{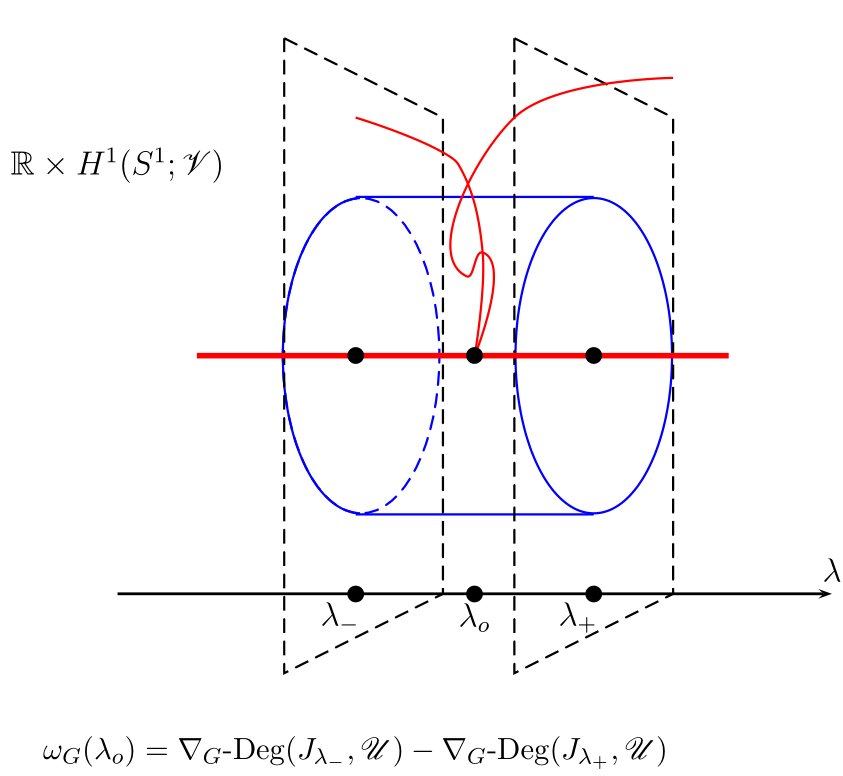} } 
\end{center}
\caption{Local bifurcation.}\label{fig:bif}
\end{figure}
\paragraph{Gradient Equivariant Degree Method.}

To provide an alternative to the equivariant singularity theory (cf.
\cite{Golubitsky}) and other geometric methods that have been used to analyze molecules (see \cite{Hoy3},
\cite{Mon1}, \cite{Mon2} and references), we proposed (see
\cite{GaTe16,BeKr,BeGa}) the method based on the equivariant gradient degree
(fundamental properties of the gradient equivariant degree are collected in
Appendix \ref{sec:equi-degree}) -- a generalization of the
Brouwer/Leray-Schauder degree that was developed in \cite{Geba} for the
gradient maps (see also \cite{DaKr} and \cite{RY2}). The gradient equivariant
degree is just one of many equivariant degrees that were introduced in the
last three decades for various types of differential equations (see
\cite{survey}, \cite{BaKr06}, \cite{IzVi03}, \cite{KW} and references therein).

To describe the main idea of this method, let us point out that the gradient
equivariant degree satisfies all the standard properties expected from a
degree theory (i.e. existence, additivity, homotopy and multiplicativity
properties). The $G$-equivariant gradient degree $\nabla_{G}\text{-Deg}(\nabla
J_{\lambda},\mathscr U)$ of $\nabla J_{\lambda}$ on $\mathscr U$ can be
expressed elegantly as an element of the Euler ring $U(G)$ (which is the free
$\mathbb{Z}$-module generated by the conjugacy classes $(H)$ of closed
subgroups $H\leq G$) in the form
\[
\nabla_{G}\text{-Deg}(\nabla J_{\lambda},\mathscr U)=n_{1}(H_{1})+n_{2}%
(H_{2})+\dots+n_{m}(H_{m}),\;\;\;n_{k}\in\mathbb{Z},
\]
where $\mathscr U$ is a neighborhood of the $G$-orbit of the equilibrium
$u_{o}$ (for some non-critical frequency $\lambda^{-1}$) and $(H_{j})$ are the
orbit types in $\mathscr U$. The changes of $\nabla_{G}\text{-deg}(\nabla
J_{\lambda},\mathscr U)$ when $\lambda^{-1}$ crosses a critical frequency
$\lambda_{o}^{-1}$ allow to establish the existence of various families of
orbits of periodic molecular vibrations and their symmetries emerging from an equilibrium. In fact, the equivariant topological invariant
\begin{equation}\label{eq:top-inv}
\omega_{G}(\lambda_{o}):=\nabla_{G}\text{-Deg\thinspace}(\nabla J_{\lambda
_{-}},\mathscr U)-\nabla_{G}\text{-Deg\thinspace}(\nabla J_{\lambda_{+}%
},\mathscr U)
\end{equation}
provides a full topological
characterization of the families of periodic solutions (together with their
symmetries) emerging from an equilibrium at $\lambda_{o}$ (cf. \cite{DGR}).
More precisely, for every non-zero coefficient $m_{j}$ in
\[
\omega_{G}(\lambda_{o})=m_{1}(K_{1})+m_{2}(K_{2})+\dots m_{r}(K_{r}),
\]
there exists a global family of periodic molecular vibrations with symmetries
at least $K_{j}$ (see Figure \ref{fig:bif} below). Moreover, if $(K_{j})$ is a maximal
orbit type then this family has exact symmetries $K_{j}$.

\paragraph{Global Bifurcation Result.}

The so-called classical Rabinowitz Theorem \cite{Ra} establishes occurrence of
a global bifurcation from purely local data for compact perturbations of the
identity. Its main idea is that if the maximal connected set $\mathcal{C}$
bifurcating from a trivial solution is compact (i.e. bounded), then the sum of
the local Leray-Schauder degrees at the set of bifurcation points of
$\mathcal{C}$ is zero. Since such maximal connected set $\mathcal{C}$ is
either unbounded or comes back to another bifurcation point, this result is
also referred to as the \emph{global Rabinowitz alternative} (we refer to
Nirenberg's book \cite{Ni} where a simplified proof of this statement is
presented in Theorem 3.4.1).

The classical Rabinowitz's global bifurcation argument can be easily adapted
in the equivariant setting for the gradient $G$-equivariant degree (cf.
\cite{GolRyb}). That is, for any $G$-orbit of a compact (bounded) branch
$\mathcal{C}$ in $\mathbb{R}_{+}\times H_{2\pi}^{1}(\mathbb{R};\Omega_{o})$
containing $(\lambda_{0},u_{o})$ we have
\begin{equation}
\sum_{k=0}^{m}\omega_{G}(\lambda_{k})=0,\label{eq:int-global}%
\end{equation}
(see Figure \ref{fig:glob-bif}), where $\lambda_{k}^{-1}$ are the normal modes
belonging to $\mathcal{C}$. In this context the \textbf{global property} means
that a family of periodic solutions, represented by continuous branch
$\mathcal{C}$ in $\mathbb{R}_{+}\times H_{2\pi}^{1}(\mathbb{R};\Omega_{o})$,
is not compact or comes back to another bifurcation point from the equilibrium.
The non-compactness of $\mathcal{C}$ implies that the norm or period of
solutions from $\mathcal{C}$ goes to the infinity, $\mathcal{C}$ ends in a
collision orbit or goes to a different equilibrium point.

\begin{figure}
\begin{center}
\resizebox{11cm}{!}{\includegraphics{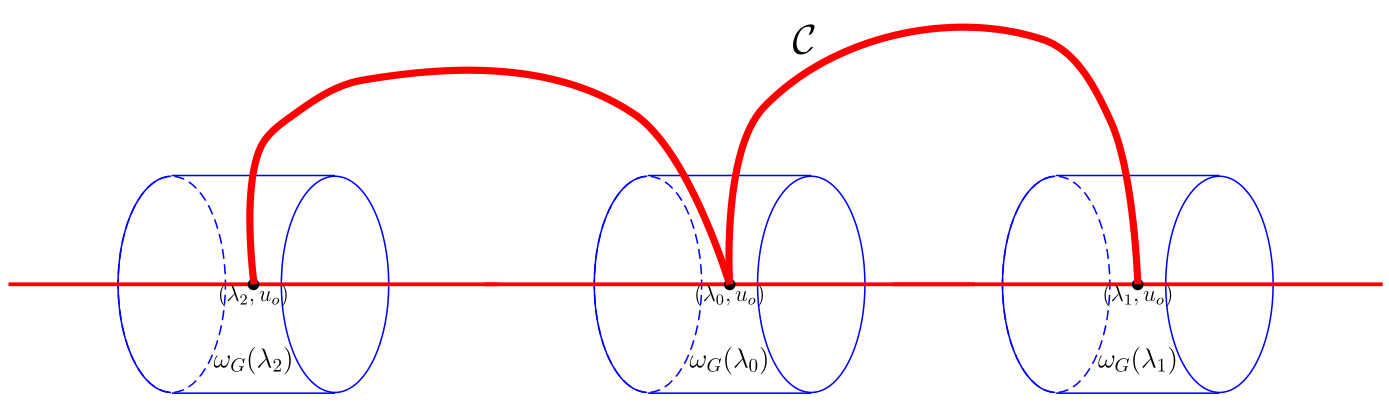} } 
\end{center}

\caption{Global bifurcation}\label{fig:glob-bif}
\end{figure}

By applying formula \eqref{eq:int-global} one can establish an effective
criterium allowing to determine the existence of the non-compact
branches of nonlinear normal modes with particular (e.g. maximal) orbit types.
To be more precise, it is sufficient to consider all the critical frequencies
$\lambda_{k}^{-1}$ corresponding to the first Fourier mode and simply show,
that for some of them, say $\lambda_{0}^{-1}$, the sum in
\eqref{eq:int-global} can never be zero. For the fullerene molecule such non-compact global
branches exist.

\paragraph{Novelty.}

In this paper, we apply equivariant gradient degree 
to the classification of the global nonlinear modes in a model of the
fullerene molecule $C_{60}$. By taking advantage of various properties of the gradient
equivariant degree, the approximate values (obtained numerically) of the
normal frequencies can be used to determine
(under plausible isotypical non-resonance assumption) the exact values of the topological
invariants $\omega_{G}(\lambda_{o})$. In particular, the information contained
in the topological invariants can be applied to obtain the presence of such
global branches of periodic solutions with the maximal orbit types, as it is
presented in our main theorem (Theorem \ref{th:main}). 

Let us point out that (to the best of our
knowledge) all the previous studies of the fullerene molecule have considered
only the existence of the linear modes (which have constant frequency), while the
occurrence of the non-linear normal modes have frequencies depending on
the amplitudes of the oscillation. \emph{Such an
analysis of nonlinear normal modes for the fullerene molecule was never
done before}. We complement our results with numerical computations using Newton's method
and pseudo-arclength procedure to continue some of these nonlinear
normal modes. 

It is important to notice that the icosahedral symmetries appear also in adenoviruses with icosahedral capsid, or
other icosahedral molecules considered in \cite{We}. The methods presented
here are applicable to these cases as well.

\vskip.3cm

\paragraph{Contents.}

The rest of the paper is arranged as follows. In section
\ref{sec:fullerene} we present the model equations
appropriate for studying the dynamics of the fullerene molecule. In subsection
\ref{sec:coordinates}, we propose a new indexation for the fullerene atoms
which greatly simplifies the description of symmetries in the molecule. Then, in subsection \ref{sec:forcefield} we discuss the
choice of the force field for the fullerene molecule that seems to be the most
appropriate in order to model nonlinear vibrations and in subsection
\ref{sec:iso-symm} we describe the action of the group $I\times O(3)$ on the
space $\mathscr V$. Then, in subsection \ref{sec:minimizer}, we find the
minimizer of the potential $V$ among the configurations with icosahedral
symmetries by applying Palais criticality principle. In subsection
\ref{sec:iso-decomp}, we identify the $I$-isotypical decomposition of the
space $\mathscr V$ and use it to determine the spectrum of the operator
$\nabla^{2}V(u_{o})$ and the $I$-isotypical types of the corresponding
eigenspaces. In Section \ref{sec:eq-bif} we prove the bifurcation of periodic
solutions from the equilibrium configuration $u_{o}$ of the fullerene
molecule. In Section \ref{sec:symmetries} we describe the symmetries of the
periodic solutions.  In addition to the theoretical results stated Theorem \ref{th:main},
several of these symmetric periodic solutions were obtained 
by numerical continuation, for which the numerical data is shown graphically.
In Appendix \ref{sec:equi-degree}, we include
a short review of the gradient degree, including computational algorithms, and
the computations of the $I\times O(2)$-equivariant gradient degree.

\vskip.3cm


\section{Fullerene Model}

\label{sec:fullerene}

\subsection{Equations for Carbons}

\label{sec:coordinates}

In this section, we propose a new indexation for the fullerene atoms
which greatly simplifies the description of symmetries in the molecule: to
each atom we assigned two indices -- one (being a $5$-cycle in $S_{5}$)
indicating in which the side of the dodecahedron the atom is located and the
second indicating its position in that side (as it is illustrated on Figure
\ref{fig:fullerene})

Let $A_{5}$ be the alternating group of permutations of five elements
$\{1,2,3,4,5\}$. The five conjugacy classes in $A_{5}$ are listed in Table
\ref{tab:conjugacy}. The $C_{60}$ molecule is arranged in $12$ unconnected
pentagons of atoms. We implement the following notation for the indices of the
$60$ atoms (see Figure \ref{fig:fullerene}):

\begin{itemize}
\item $\tau\in\mathcal{C}_{4}$ is used to denote each of the $12$ pentagonal faces.

\item $k\in\{1,...,5\}=:\mathbb{Z}[1,5]$ is used to denote each of the $5$
vertices in the $12$ pentagonal faces.
\end{itemize}

\begin{table}[ptb]
\vskip.4cm
\par
\begin{center}%
\begin{tabular}
[c]{|c|c|c|c|c|}%
\noalign{\vskip2pt\hrule\vskip2pt} $\mathcal{C}_{1}$ & $\mathcal{C}_{2}$ &
$\mathcal{C}_{3}$ & $\mathcal{C}_{4}$ & $\mathcal{C}_{5}$\\
\noalign{\vskip2pt\hrule\vskip2pt} $(1)$ & $(12)(34)$, $(13)(24)$,$(14)(23)$ &
$(123)$, $(132)$ & $(12345)$ & $(12354)$\\
& $(12)(35)$, $(13)(25)$,$(15)(23)$ & $(124)$, $(142)$ & $(12453)$ &
$(12435)$\\
& $(12)(45)$, $(14)(25)$,$(15)(24)$ & $(125)$, $(152)$ & $(12534)$ &
$(12543)$\\
& $(13)(45)$, $(14)(35)$, $(15)(34)$ & $(134)$, $(143)$ & $(13254)$ &
$(13245)$\\
& $(23)(45)$, $(24)(35)$,$(25)(34)$ & $(135)$, $(153)$ & $(13542)$ &
$(13524)$\\
&  & $(145)$, $(154)$ & $(13425)$ & $(13452)$\\
&  & $(234)$, $(243)$ & $(14235)$ & $(14253)$\\
&  & $(235)$, $(253)$ & $(14352)$ & $(14325)$\\
&  & $(245)$, $(254)$ & $(14523)$ & $(14532)$\\
&  & $(345)$, $(354)$ & $(15243)$ & $(15234)$\\
&  &  & $(15432)$ & $(15423)$\\
&  &  & $(15324)$ & $(15342)$\\\hline
\end{tabular}
\end{center}
\caption{Conjugacy classes of elements in $A_{5}$.}%
\label{tab:conjugacy}%
\end{table}

We define the set of indices as
\[
\Lambda=\mathcal{C}_{4}\times\mathbb{Z}[1,5]~\text{.}%
\]
With these notations each index $(\tau,k)\in\Lambda$ represents a face and a
vertex in the face of the truncated icosahedron as it is shown on Figure
\ref{fig:fullerene}. The vectors that represent the positions of the carbon
atoms are%
\[
u_{\tau,k}\in\mathbb{R}^{3},
\]
and the vector for the $60$ positions is
\[
u=(u_{\tau,k})_{(\tau,k)\in\Lambda}\in\left(  \mathbb{R}^{3}\right)
^{60}\text{.}%
\]

The space $\left(  \mathbb{R}^{3}\right)  ^{60}$ is a representation of the
group $I\times O(3)$, where $I=A_{5}\times\mathbb{Z}_{2}$ stands for the full
icosahedral group. With this notation, the action of $I\times O(3)$ on $V$ has
a simple definition: the action of $\sigma\in A_{5}$ and $-1\in\mathbb{Z}_{2}$
in $u$ is defined in each component by%
\begin{equation}
\rho(\sigma)u_{\tau,k}=u_{\sigma^{-1}\tau\sigma,\sigma^{-1}(k)}~,\qquad
\rho(-1)u_{\tau,k}:=u_{\tau^{-1},k}~. \label{eq:act1}%
\end{equation}
And the action of the group $A\in O(3)$ is defined by
\[
Au=(Au_{\tau,k})_{(\tau,k)\in\Lambda}.
\]

\subsection{Force Field}

\label{sec:forcefield}

The system for the fullerene molecule is given by \eqref{eq:1}. To describe
the force field $\nabla V$ we recognize that:

\begin{itemize}
\item The $60$ edges in the $12$ pentagonal faces represent single bonds. For
these single bonds we define the function $S:\Lambda\rightarrow\Lambda,$%
\[
S(\tau,k)=(\tau,\tau(k))~, \quad\tau\in\mathcal{C}_{4},\; k\in\mathbb{Z}%
[1,5].
\]

\item The $30$ remaining edges in the hexagon, which connect the different
pentagonal faces, represent double bounds. For these double bonds we define
the function $D:\Lambda\rightarrow\Lambda$,%
\[
D(\tau,k)=(\sigma,k)\text{ with }\sigma=\left(  k,\tau^{2}(k),\tau(k),\tau
^{4}(k),\tau^{3}(k)\right)  .
\]

\end{itemize}

Using the above notation, the force field energy is elegantly expressed by
\[
V(u)=\sum_{\left(  \tau,k\right)  \in\Lambda}\left(  U(\left\vert u_{\tau
,k}-u_{S(\tau,k)}\right\vert )+\frac{1}{2}U(\left\vert u_{\tau,k}%
-u_{D(\tau,k)}\right\vert )+U_{\left(  \tau,k\right)  }(u)\right)  \text{,}%
\]
where the coefficient $\frac12$ before the second term is to eliminate the
double count bonds, and the term $U_{\left(  \tau,k\right)  }(u)$ includes the
bending and torsion forces.

Bond stretching is represent by potential%
\[
U(x)=E_{0}\left(  \left(  1-e^{-\beta(x-r_{0})}\right)  ^{2}-1\right)  ,
\]
where $r_{0}$ is the equilibrium bond length, $E_{0}$ is the bond energy and
$\beta^{-1}$ is the width of the energy. The term $U_{\left(  \tau,k\right)
}(u)$ includes bending and torsion forces given by%
\[
U_{\left(  \tau,k\right)  }(u)=E_{B}(\theta_{1})+E_{B}(\theta_{2}%
)+E_{B}(\theta_{3})+E_{T}(\phi_{1})+E_{T}(\phi_{2})+E_{T}(\phi_{3}),
\]
where the bending $E_{B}(\theta)$ around each atom in a molecule is governed
by the hybridization of orbitals and is given by%
\[
E_{B}(\theta)=\frac{1}{2}k_{0}(\cos\theta-\cos\theta_{0})^{2}=\frac{1}%
{2}k_{\theta}(\cos\theta+1/2)^{2},\quad\theta_{0}:=2\pi/3,
\]
(here $\theta_{o}$ is the equilibrium angle and $k_{0}$ is the bending force
constant), and the torsion energy $E_{T}(\phi)$ (which describes the energy
change associated with rotation around a bond with a four-atom sequence) is
given by%
\[
E_{T}(\phi)=\frac{1}{2}k_{\phi}\left(  1-\cos2\phi\right)  =k_{\phi}\left(
1-\cos^{2}\phi\right)  \text{.}%
\]
The torsion energy reaches a maximum value at angles $\phi=\pm\pi/2$.

Each carbon $\left(  \tau,k\right)  \in\Lambda$ has three angles,%
\begin{align*}
\cos\theta_{1}  &  =\frac{u_{\tau,k}-u_{S(\tau,k)}}{\left\vert u_{\tau
,k}-u_{S(\tau,k)}\right\vert }\bullet\frac{u_{\tau,k}-u_{S^{-1}(\tau,k)}%
}{\left\vert u_{\tau,k}-u_{S^{-1}(\tau,k)}\right\vert },\\
\cos\theta_{2}  &  =\frac{u_{\tau,k}-u_{D(\tau,k)}}{\left\vert u_{\tau
,k}-u_{D(\tau,k)}\right\vert }\bullet\frac{u_{\tau,k}-u_{S(\tau,k)}%
}{\left\vert u_{\tau,k}-u_{S(\tau,k)}\right\vert }~,\\
\cos\theta_{3}  &  =\frac{u_{\tau,k}-u_{D(\tau,k)}}{\left\vert u_{\tau
,k}-u_{D(\tau,k)}\right\vert }\bullet\frac{u_{\tau,k}-u_{S^{-1}(\tau,k)}%
}{\left\vert u_{\tau,k}-u_{S^{-1}(\tau,k)}\right\vert }.
\end{align*}
Clearly, the bond bending at each atom $\left(  \tau,k\right)  \in\Lambda$ is
$E_{B}(\theta_{1})+E_{B}(\theta_{2})+E_{B}(\theta_{3})$. Let
\[
n=\frac{u_{D(\tau,k)}-u_{S(\tau,k)}}{\left\vert u_{D(\tau,k)}-u_{S(\tau
,k)}\right\vert }\times\frac{u_{D(\tau,k)}-u_{S^{-1}(\tau,k)}}{\left\vert
u_{D(\tau,k)}-u_{S^{-1}(\tau,k)}\right\vert }%
\]
be the unit normal vector to the plane passing by $u_{D(\tau,k)}$,
$u_{S(\tau,k)}$ and $u_{S^{-1}(\tau,k)}$. Each carbon $\left(  \tau,k\right)
\in\Lambda$ has three torsion energies given by%
\begin{align*}
\cos\phi_{1}  &  =n\bullet n_{1}~,\qquad n_{1}=\frac{u_{(\tau,k)}%
-u_{S(\tau,k)}}{\left\vert u_{(\tau,k)}-u_{S(\tau,k)}\right\vert }\times
\frac{u_{(\tau,k)}-u_{S^{-1}(\tau,k)}}{\left\vert u_{(\tau,k)}-u_{S^{-1}%
(\tau,k)}\right\vert }~,\\
\cos\phi_{2}  &  =n\bullet n_{2}~,\qquad n_{2}=\frac{u_{(\tau,k)}%
-u_{D(\tau,k)}}{\left\vert u_{(\tau,k)}-u_{D(\tau,k)}\right\vert }\times
\frac{u_{(\tau,k)}-u_{S^{-1}(\tau,k)}}{\left\vert u_{(\tau,k)}-u_{S^{-1}%
(\tau,k)}\right\vert }~,\\
\cos\phi_{3}  &  =n\bullet n_{3}~,\qquad n_{3}=\frac{u_{(\tau,k)}%
-u_{D(\tau,k)}}{\left\vert u_{(\tau,k)}-u_{D(\tau,k)}\right\vert }\times
\frac{u_{(\tau,k)}-u_{S(\tau,k)}}{\left\vert u_{(\tau,k)}-u_{S(\tau
,k)}\right\vert }~.
\end{align*}
Then, the bond bending at each atom $\left(  \tau,k\right)  \in\Lambda$ is
$E_{T}(\phi_{1})+E_{T}(\phi_{2})+E_{T}(\phi_{3})$.

For this work, we use the parameters given in \cite{Be} , which are
$E_{0}=6.1322~eV$, $\beta=1.8502~A^{-1}$, $r_{0}=01.4322$~$A$, $k_{\theta
}=10~eV$, $k_{\phi}=0.346~eV$. In this paper we use exactly these values.

\subsection{Icosahedral Symmetries}

\label{sec:iso-symm} In order to make the system \eqref{eq:1} reference
point-depended, we define the subspace
\begin{equation}
\mathscr V:=\{u\in\left(  \mathbb{R}^{3}\right)  ^{60}:\sum_{(\sigma
,k)\in\Lambda}u_{\sigma,k}=0\} \label{eq:V}%
\end{equation}
and
\[
\Omega_{o}=\left\{  u\in\mathscr V:u_{\sigma,k}\not =u_{\tau,j}\right\}  .
\]
We have that $\mathscr V$ and $\Omega_{o}$ are flow-invariant for \eqref{eq:1}

By the properties of functions $S$ and $D$, the potential
\begin{equation}
V:\Omega_{o}\rightarrow\mathbb{R~} \label{eq:mol}%
\end{equation}
is well defined and $I$-invariant. Moreover, the potential $V$ is invariant by
rotations and reflections of the group $O(3)$ because bonding, bending and
torsion forces depend only on the norm of the distances among pairs of atoms.
Therefore, the potential $V$ is $I\times O(3)$-invariant,%
\[
V((\sigma,A)u)=V(u),\qquad(\sigma,A)\in I\times O(3)~.
\]

Let
\begin{equation}
\label{eq:infinitezimal}J_{1}:=\left[
\begin{array}
[c]{ccc}%
0 & 0 & 0\\
0 & 0 & -1\\
0 & 1 & 0
\end{array}
\right]  ,\quad J_{2}:=\left[
\begin{array}
[c]{ccc}%
0 & 0 & -1\\
0 & 0 & 0\\
1 & 0 & 0
\end{array}
\right]  ,\quad J_{3}:=\left[
\begin{array}
[c]{ccc}%
0 & -1 & 0\\
1 & 0 & 0\\
0 & 0 & 0
\end{array}
\right]
\end{equation}
be the three infinitesimal generators of rotations in $O(3)$, i.e., $e^{\phi
J_{1}}$, $e^{\theta J_{2}}$ and $e^{J_{3}\psi}$, where $\phi$, $\theta$ and
$\psi$ are the Euler angles. The angle between two adjacent pentagons in a
dodecahedron is $\theta_{0}=\arctan2$. Then, the rotation by $\pi$ that fixes
a pair of antipodal edges is%
\begin{equation}
A=e^{-\left(  \theta_{0}/2\right)  J_{2}}e^{\pi J_{3}}e^{\left(  \theta
_{0}/2\right)  J_{2}}=%
\begin{bmatrix}
-\frac{1}{\sqrt{5}} & 0 & \frac{2}{\sqrt{5}}\\
0 & -1 & 0\\
\frac{2}{\sqrt{5}} & 0 & \frac{1}{\sqrt{5}}%
\end{bmatrix}
\text{.}%
\end{equation}
The rotation by $2\pi/5$ of the upper pentagonal face of a dodecahedron is%
\begin{equation}
B=e^{\frac{2\pi}{5}J_{3}}=%
\begin{bmatrix}
\frac{-1+\sqrt{5}}{4} & -\sqrt{\frac{5+\sqrt{5}}{8}} & 0\\
\sqrt{\frac{5+\sqrt{5}}{8}} & \frac{-1+\sqrt{5}}{4} & 0\\
0 & 0 & 1
\end{bmatrix}
~\text{.}%
\end{equation}
The subgroup of $O(3)$ generated by the matrices $A$ and $B$ is isomorphic to
icosahedral group $A_{5}$. Indeed, the generators $A$ and $B$ satisfy the
relations%
\[
A^{2}=B^{5}=(AB)^{3}=\text{\textrm{Id\,}}\ .
\]
On the other hand, the group $A_{5}$ is generated by
\begin{equation}
a=(23)(45),\qquad b=\left(  12345\right)  \text{,}%
\end{equation}
and we have similar relations%
\[
a^{2}=b^{5}=(ab)^{3}=(1).
\]
Therefore, the explicit homomorphism $\rho:A_{5}\rightarrow\mathrm{SO}(3)$
defined on generators by $\rho(a):= A$ and $\rho( b):= B$ is the required
isomorphism $A_{5}\simeq\rho(A_{5})\subset SO(3)$. We extend
\[
\rho:A_{5}\times\mathbb{Z}_{2}\rightarrow O(3)
\]
with $\rho(-1)=-\text{\textrm{Id\,}}\in O(3)$, and consequently we obtain an
explicit identification of the full icosahedral group $I$ with $\rho(I)\subset
O(3)$.

\subsection{Icosahedral Minimizer}

\label{sec:minimizer}

Let $\tilde{I}$ be the icosahedral subgroup of $I\times O(3)$ given by
\[
\tilde{I}=\left\{  (\sigma,\rho(\sigma))\in I\times O(3):\sigma\in I\right\}
~.
\]
The fixed point space
\[
\Omega_{0}^{\tilde{I}}=\mathscr V^{\tilde{I}}\cap\Omega_{o}=\{\left(
a_{\tau,k}\right)  _{(\tau,k)\in\Lambda}\in\Omega_{0}:a_{\tau,k}=(\sigma
,\rho(\sigma))a_{\tau,k}\},
\]
consist of all the truncated icosahedral configurations. An equilibrium of the
fullerene molecule can be found as a minimizer of $V$ on these configurations.
More precisely, since $V$ is $I\times O(3)$-invariant, by Palais criticality
principle, the minimizer of the potential $V$ on the fixed-point space of
$\tilde{I}$ is a critical point of $V$.

To find the minimizer among configurations with symmetries $\tilde{I}$, we
parameterize the carbons positions by fixing the position of one of them. Let
\[
u_{b,1}=(x,0,z), \quad x,z\in\mathbb{R},
\]
then we have
\[
u_{b,1}=(\sigma,\rho(\sigma))u_{b,1}=\rho(\sigma)u_{\sigma^{-1}b\sigma
,\sigma^{-1}(1)}\text{~,}%
\]
and relations
\begin{equation}
u_{\sigma^{-1}b\sigma,\sigma^{-1}(1)}=\rho(\sigma)^{-1}u_{b,1}=\rho
(\sigma)^{-1}(x,0,z)^{T},\quad\sigma\in A_{5}, \label{rel}%
\end{equation}
allow us to determine the positions of all other coordinates of the vector
$u=\left(  u_{\tau,k}\right)  $.

Therefore, the representation of $u(x,z)$ given by (\ref{rel}) provides us a
parametrization of a connected component of $\Omega_{0}^{\tilde{I}}/O(3)$ with
the domain
\[
\mathcal{D}=\{(x,z)\in\mathbb{R}^{2}:0<z,\;x<Cz\}
\]
where $C>0$ is a number determined by the geometric restrictions. We define
$v:\mathcal{D}\rightarrow\mathbb{R}$ by
\[
v(x,z)=V(u(x,z)).
\]
Since $v(x,z)$ is exactly the restriction of $V$ to the fixed-point subspace
$\Omega_{0}^{\tilde{I}}/O(3)$, then by the symmetric criticality condition, a
critical point of $v:\mathcal{D}\rightarrow\mathbb{R}$ is also a critical
point of $V$.

We implemented a numerical minimizing procedure to find the minimizer
$(x_{o},z_{o})$ of $v(x,z)$. We denote the truncated icosahedral minimizer
corresponding to the fullerene $C_{60}$ as
\[
u_{o}=u\left(  x_{o},z_{o}\right)  \in\mathscr V.
\]
The lengths of single and double bonds for this minimizer are given by%
\begin{align*}
d_{S}=\left\vert u_{b,1}-u_{S(b,1)}\right\vert =1.438084,\\
d_{D}=\left\vert u_{b,1}-u_{D(b,1)}\right\vert =1.420845\text{,}%
\end{align*}
respectively. These results are in accordance with the distances measured in
the paper \cite{He}.

An advantage of the notation $u=\left(  u_{\tau,k}\right)  $ is that it is
easy to visualize the elements associated to the rotations $\rho(\sigma)$ in
the truncated icosahedron (Figure \ref{fig:fullerene}). In these
configurations we have $\rho(\sigma)u_{\tau,k}=\sigma^{-1}u_{\tau,k}%
=u_{\sigma\tau\sigma^{-1},\sigma(k)}$, then $\rho(\sigma)$ is identified with
the rotation that sends the face $\tau$ to $\sigma\tau\sigma^{-1}$ and the
carbon atom identified by $k$ to $\sigma(k)$; for example, under the $\pi
$-rotation given by $\rho(a)=A$, face $b=(12345)$ goes to the face
$aba^{-1}=(13254)$, and the element $k=1$ to $a(1)=1$. In this manner, we
conclude that the elements of the conjugacy classes $\mathcal{C}_{2}$,
$\mathcal{C}_{3}$, $\mathcal{C}_{4}$ and $\mathcal{C}_{5}$ correspond to the
$15$ rotations by $\pi$, the $20$ rotations by $2\pi/3$, the $12$ rotations by
$2\pi/5$ and the $12$ rotations by $\pi/5$, respectively.

\subsection{Isotypical Decomposition and Spectrum of $\nabla^{2}V(u_{o})$}

\label{sec:iso-decomp}

The space $\mathscr V$ is an orthogonal complement in $\widetilde
{\mathscr V}=(\mathbb{R}^{3})^{60}$ of the subspace $\{(v,v,\dots,v):
v\in\mathbb{R}^{3}\}$, thus it is $\tilde{I}$-invariant. Given that the system
\eqref{eq:1}, $u(t)\in\mathscr V$, is symmetric with respect the group action
of $I\times O(3)$, the orbit of equilibria $u_{o}$ is a $3$-dimensional
submanifold in $\mathscr V$. To describe the tangent space, we define
\[
\mathcal{J}_{j}u=(J_{j}u_{\sigma,k}),
\]
where $J_{j}$ are the three infinitesimal generators of the rotations defined
in \eqref{eq:infinitezimal}. Then, the slice $S_{o}$ to the orbit of $u_{o}$
is
\[
S_{o}:=\{x\in\mathscr V:x\bullet\mathcal{J}_{j}u_{0}=0,\quad j=1,2,3\}.
\]
Since $u_{o}$ has the isotropy group $\tilde{I}$, then $S_{o}$ is an
orthogonal $\tilde{I}$ representation.

In this section we identify the $\tilde{I}$-isotypical components of $S_{o}$.
In order to simplify the notation, hereafter we identify $\tilde{I}$ with the
group $A_{5}\times\mathbb{Z}_{2}$,%
\[
\tilde{I}=A_{5}\times\mathbb{Z}_{2}~.
\]
Put $\varphi_{\pm}=\frac{1}{2}(1\pm\sqrt{5})$, and consider the permutations
$a=(2,3)(4,5)$, $b=\left(  1,2,3,4,5\right)  $ and $c=(1,2,3)$. The character
table of $A_{5}$ is:%

\[%
\begin{tabular}
[c]{|cc|ccccc|}\hline
Rep. & Character & $(1)$ & $a$ & $c$ & $b$ & $b^{2}$\\\hline
$\mathcal{V}_{1}$ & $\chi_{1}$ & $1$ & $1$ & $1$ & $1$ & $1$\\
$\mathcal{V}_{2}$ & $\chi_{2}$ & $4$ & $0$ & $1$ & $-1$ & $-1$\\
$\mathcal{V}_{3}$ & $\chi_{3}$ & $5$ & $1$ & $-1$ & $0$ & $0$\\
$\mathcal{V}_{4}$ & $\chi_{4}$ & $3$ & $-1$ & $0$ & $\varphi_{+}$ &
$\varphi_{-}$\\
$\mathcal{V}_{5}$ & $\chi_{5}$ & $3$ & $-1$ & $0$ & $\varphi_{-}$ &
$\varphi_{+}$\\\hline
\end{tabular}
\ \ \ \ \
\]
The character table of $\tilde{I}\simeq A_{5}\times\mathbb{Z}_{2}$ is obtained
from the table of $A_{5}$. We denote the irreducible representations of $I$
by\ $\mathcal{V}_{\pm n}$ for $n=1,...,5$, where the element $-1\in
\mathbb{Z}_{2}$ acts as $\pm\text{\textrm{Id\,}}$ in $\mathcal{V}_{\pm n}$ and
elements $\gamma\in A_{5}$ act as they act on $\mathcal{V}_{n}$. Notice that
all the representations $\mathcal{V}_{\pm n}$ are absolutely irreducible.
Therefore, the character table for $A_{5}\times\mathbb{Z}_{2}$ is as follows:
\begin{table}[ptb]
\begin{center}%
\begin{tabular}
[c]{|c|cccccccccc|}\hline
& (1) & $a$ & $c$ & $b$ & $b^{2}$ & $(-1)$ & $(a,-1) $ & $(c, -1) $ & $(b,-1)$
& $(b^{2},-1) $\\\hline
$\chi_{1}$ & 1 & 1 & 1 & 1 & 1 & 1 & 1 & 1 & 1 & 1\\
$\chi_{2}$ & 4 & 0 & 1 & -1 & -1 & 4 & 0 & 1 & -1 & -1\\
$\chi_{3}$ & 5 & 1 & -1 & 0 & 0 & 5 & 1 & -1 & 0 & 0\\
$\chi_{4}$ & 3 & -1 & 0 & $\varphi_{+} $ & $\varphi_{-}$ & 3 & -1 & 0 &
$\varphi_{+}$ & $\varphi_{-}$\\
$\chi_{5}$ & 3 & -1 & 0 & $\varphi_{-}$ & $\varphi_{+}$ & 3 & -1 & 0 &
$\varphi_{-}$ & $\varphi_{+}$\\
$\chi_{-1}$ & 1 & 1 & 1 & 1 & 1 & -1 & -1 & -1 & -1 & -1\\
$\chi_{-2}$ & 4 & 0 & 1 & -1 & -1 & -4 & 0 & -1 & 1 & 1\\
$\chi_{-3}$ & 5 & 1 & -1 & 0 & 0 & -5 & -1 & 1 & 0 & 0\\
$\chi_{-4}$ & 3 & -1 & 0 & $\varphi_{+} $ & $\varphi_{-}$ & -3 & 1 & 0 &
$-\varphi_{+} $ & $-\varphi_{-}$\\
$\chi_{-5}$ & 3 & -1 & 0 & $\varphi_{-}$ & $\varphi_{+} $ & -3 & 1 & 0 &
$-\varphi_{-} $ & $-\varphi_{+}$\\\hline
\end{tabular}
\end{center}
\caption{Character table for $A_{5}\times\mathbb{Z}_{2}$}%
\label{tab:I}%
\end{table}\vskip.3cm By comparing the character $\chi_{_{\mathscr V}}$ withe
the characters in Table \ref{tab:I}, we obtain the following $I$-isotypical
decomposition of $\mathscr V$
\[
\mathscr V=\bigoplus_{n=1}^{5} \mathscr V_{n}\oplus\mathscr V_{-n},
\]
where $\mathscr V_{\pm n}$ is modeled on $\mathcal{V}_{\pm n}$.

\vskip.3cm

We numerically computed the spectrum $\{\mu_{j}: j=1,2,\dots, 47\}$ of the
Hessian $\nabla^{2}V(u_{o})$ at this minimizer $u_{o}$. Since $\nabla
^{2}V(u_{o}):\mathscr V\rightarrow\mathscr V$ is $\tilde{I}$-equivariant,
\[
\nabla^{2}V(u_{o})|_{\mathscr{V}_{n}\cap E(\mu_{j})}=\mu_{j}%
\,\text{\textrm{Id}}:\mathscr{V}_{n}\cap E(\mu_{j})\rightarrow\mathscr{V}_{n}%
\cap E(\mu_{j}),
\]
where $E(\mu_{j})$ stands for the eigenspace corresponding to $\mu_{j}$. We
found that each of the eigenspaces $E(\mu_{j})$ is an irreducible
subrepresentation of $\mathscr V$ , i.e. the isotypical multiplicity of
$\mu_{j}$ is simple. Including the zero eigenspace, we have $47$ different
components. Thus we have that $\sigma(\nabla^{2}V(u_{o})|_{{S_{o}}})=\{\mu
_{1},...,\mu_{46}\}$ with $\mu_{j}>0$, so
\begin{equation}
S_{o}=\bigoplus_{j=1}^{46}\mathcal{V}_{n_{j}} \quad\text{ and } \quad
\nabla^{2}V(u_{o})|_{\mathcal{V}_{n_{j}}}=\mu_{j}\,\text{\textrm{Id}.}
\label{eq:S4-iso-S}%
\end{equation}

In order to determine in which isotypical component $\mathscr V_{n_{j}}$ the
eigenspace $E(\mu_{j})$ is contained for a given eigenvalue $\mu_{j}$, we
apply the isotypical projections
\[
P_{{n}}v:=\frac{\dim(\mathcal{V}_{n})}{120}\sum_{g\in\tilde{I}}\chi
_{n}(g)~gv,\quad v\in\mathscr V,
\]
where $\mathcal{V}_{n}$, $n=\pm1,\dots,\pm5$, is the irreducible
representation identified by the character Table \ref{tab:I}. Then the
component $\mathscr V_{n_{j}}$ can be clearly identified by the projection
$P_{n_{j}}$.

\begin{table}[t]
\begin{center}
\scalebox{.7}{
\begin{tabular}
[c]{|c|cccc|}\hline
$j$ & \text{{\small Mult.}} & $\mu_{j}$ & $\lambda_j$&${\small n}_{j}$\\
\hline
{\small 1} & {\small 5} & {\small 176.536} &{\small $ 0.075263 $}& -{\small 3}\\
{\small 2} & {\small 5} & {\small 176.366} &{\small $ 0.075300 $}& {\small 3}\\
{\small 3} & {\small 4} & {\small 164.083} &{\small $ 0.078067 $}& {\small 2}\\
{\small 4} & {\small 4} & {\small 160.292} &{\small $ 0.078985 $}& -{\small 2}\\
{\small 5} & {\small 3} & {\small 159.290} &{\small $ 0.079233$}& -{\small 5}\\
{\small 6} & {\small 5} & {\small 148.597} &{\small $ 0.082034 $}& {\small 3}\\
{\small 7} & {\small 3} & {\small 141.071} &{\small $ 0.084194 $}& -{\small 4}\\
{\small 8} & {\small 3} & {\small 140.573} &{\small $ 0.084343 $}& {\small 5}\\
{\small 9} & {\small 1} & {\small 135.632} &{\small $ 0.085866 $}& {\small 1}\\
{\small 10} & {\small 4} & {\small 134.935} &{\small $ 0.086087$}& -{\small 2}\\
{\small 11} & {\small 4} & {\small 129.544} &{\small $ 0.087860 $}& {\small 2}\\
{\small 12} & {\small 5} & {\small 125.431} &{\small $ 0.089289 $}& -{\small 3}\\
{\small 13} & {\small 3} & {\small 107.719} &{\small $ 0.096350 $}& {\small 4}\\
{\small 14} & {\small 5} & {\small 98.5525} &{\small $ 0.100732 $}& {\small 3}\\
{\small 15} & {\small 3} & {\small 93.4648} &{\small $ 0.103437 $}& -{\small 5}\\
{\small 16} & {\small 5} & {\small 87.7541} &{\small $0.106750  $}& -{\small 3}\\
{\small 17} & {\small 3} & {\small 83.9718} &{\small $ 0.109127 $}& -{\small 4}\\
{\small 18} & {\small 4} & {\small 71.6288} &{\small $ 0.118156 $} &{\small 2}\\
{\small 19} & {\small 5} & {\small 67.1181} &{\small $ 0.122062 $} &{\small 3}\\
{\small 20} & {\small 1} & {\small 59.3865} &{\small $  0.129765$}& -{\small 1}\\
{\small 21} & {\small 3} & {\small 50.4797} &{\small $0.140748 $}& -{\small 5}\\
{\small 22} & {\small 4} & {\small 47.5646} &{\small $  0.144997$}& -{\small 2}\\
{\small 23} & {\small 3} & {\small 42.2947} &{\small $ 0.153765$}& {\small 4}\\
\hline
\end{tabular}
\qquad\begin{tabular}
[c]{|c|cccc|}\hline
$j$ & Mult. & ${\small \mu}_{j}$ &  $\lambda_j$&$n_{j}$\\
\hline
{\small 24} & {\small 3} & {\small 41.3918} &{\small $ 0.155433$}& {\small 5}\\
{\small 25} & {\small 4} & {\small 33.9885} &{\small $  0.171528$}& -{\small 2}\\
{\small 26} & {\small 5} & {\small 28.8031} &{\small $ 0.186329 $}& -{\small 3}\\
{\small 27} & {\small 5} & {\small 27.4795} &{\small $ 0.190764 $}& {\small 3}\\
{\small 28} & {\small 3} & {\small 27.3153} &{\small $ 0.191336 $}& {\small 5}\\
{\small 29} & {\small 4} & {\small 25.5388} &{\small $ 0.197879 $}& -{\small 2}\\
{\small 30} & {\small 4} & {\small 22.7212} &{\small $  0.209790$}& {\small 2}\\
{\small 31} & {\small 5} & {\small 19.4536} &{\small $ 0.226725 $}& -{\small 3}\\
{\small 32} & {\small 5} & {\small 19.3377} & {\small $ 0.227404 $}&{\small 3}\\
{\small 33} & {\small 3} & {\small 19.2379} &{\small $ 0.227993 $}& -{\small 5}\\
{\small 34} & {\small 4} & {\small 16.5356} &{\small $ 0.245918 $} &{\small 2}\\
{\small 35} & {\small 3} & {\small 16.5255} & {\small $ 0.245993 $}&{\small 5}\\
{\small 36} & {\small 3} & {\small 15.1033} & {\small $ 0.257314 $}&-{\small 4}\\
{\small 37} & {\small 3} & {\small 10.3908} & {\small $0.310224  $}&{\small 4}\\
{\small 38} & {\small 1} & {\small 10.2520} &{\small $ 0.312317 $} &{\small 1}\\
{\small 39} & {\small 5} & {\small 10.1098} &{\small $ 0.314506 $} &-{\small 3}\\
{\small 40} & {\small 3} & {\small 9.03077} &{\small $ 0.332765 $} &-{\small 4}\\
{\small 41} & {\small 4} & {\small 9.02666} &{\small $ 0.332841 $}& {\small 2}\\
{\small 42} & {\small 5} & {\small 6.99929} & {\small $ 0.377984 $}&{\small 3}\\
{\small 43} & {\small 5} & {\small 6.95354} & {\small $  0.379225$}&-{\small 3}\\
{\small 44} & {\small 3} & {\small 5.42311} &{\small $0.429414  $} &-{\small 5}\\
{\small 45} & {\small 4} & {\small 5.26429} & {\small $ 0.435843$}&-{\small 2}\\
{\small 46} & {\small 5} & {\small 3.04384} &{\small $  0.573177$} &{\small 3}\\
\hline
\end{tabular}}
\end{center}
\caption{Eigenvalues $\mu_{j}$ of $\nabla^{2}V(u_{o})$ and critical numbers
$\lambda_{j}$ according to their isotypical type $\mathcal{V}_{n_{j}}$.}\label{tab:crit-values}
\end{table}

In the following Table \ref{tab:crit-values}, we show the number $n_{j}%
\in\{-5,...,-1,1,...,5\}$ that identifies the irreducible representation
corresponding to the eigenvalue $\mu_{j}$ for $j=1,..,46$. The numerical
computations strongly indicate that all the eigenvalues $\mu_{j}$,
$j=1,..,46$, are non-resonant. 

\begin{remark}
\textrm{The models proposed in \cite{Wa} and \cite{Be}, consider the presence
of van der Waals forces among carbon atoms, which are modeled by the
potential
\[
W(x)=\varepsilon\left(  \frac{\sigma^{12}}{x^{12}}-2\frac{\sigma^{6}}{x^{6}%
}\right)  ~,
\]
where $\sigma=3.4681~A^{-1}$ is the minimum energy distance and $\epsilon
=0.0115~eV$ the depth of this minimum. Our numerical computations indicate
that the models with van der Waals forces do not produce acceptable bond
lengths between the atoms (as it is given in \cite{He}), neither the spectrums
fit the experimental data (cf. \cite{Ho}). Actually, the models \cite{Wa} and
\cite{Be} without van der Waals forces lead to results which correctly
approximate the measurements in \cite{He} and \cite{Ho} (for bond lengths
$d_{S}$ and $d_{D}$ and frequencies $\sqrt{\mu_{j}/m}$, which are within the
range $100$ to $1800$ $cm^{-1}$). }
\end{remark}

\section{Equivariant Bifurcation}

\label{sec:eq-bif}

In what follows, we are interested in finding non-trivial $T$-periodic
solutions to \eqref{eq:1}, bifurcating from the $G$-orbit of the equilibrium point
$u_{o}$. By normalizing the period, i.e. by making the substitution
$v(t):=u\left(  \lambda t\right)  $ in \eqref{eq:1}, we obtain the system
\begin{equation}%
\begin{cases}
\ddot{v}=-\lambda^{2}\nabla V(v),\\
v(0)=v(2\pi),\;\;\dot{v}(0)=\dot{v}(2\pi),
\end{cases}
\label{eq:mol1}%
\end{equation}
where $\lambda^{-1}=2\pi/T$ is the frequency.

\subsection{Equivariant Gradient Map}

Since $\mathscr V$ is an orthogonal $I\times O(3)$- representation, we can
consider the first Sobolev space of $2\pi$-periodic functions from
$\mathbb{R}$ to $\mathscr V$, i.e.,
\[
H_{2\pi}^{1}(\mathbb{R},\mathscr V):=\{u:\mathbb{R}\rightarrow\mathscr
V\;:\;u(0)=u(2\pi),\;u|_{[0,2\pi]}\in H^{1}([0,2\pi];\mathscr V)\},
\]
equipped with the inner product
\[
\langle u,v\rangle:=\int_{0}^{2\pi}(\dot{u}(t)\bullet\dot{v}(t)+u(t)\bullet
v(t))dt~.
\]

Let $O(2)=SO(2)\cup\kappa SO(2)$ denote the group of $2\times2$-orthogonal
matrices, where
\[
\kappa=\left[
\begin{array}
[c]{cc}%
1 & 0\\
0 & -1
\end{array}
\right]  ,\qquad\left[
\begin{array}
[c]{cc}%
\cos\tau & -\sin\tau\\
\sin\tau & \cos\tau
\end{array}
\right]  \in SO(2)~.
\]
It is convenient to identify a rotation with $e^{i\tau}\in S^{1}%
\subset\mathbb{C}$. Notice that $\kappa e^{i\tau}=e^{-i\tau}\kappa$, i.e.,
$\kappa$ as a linear transformation of $\mathbb{C}$ into itself, acts as
complex conjugation.

Clearly, the space $H_{2\pi}^{1}(\mathbb{R},\mathscr V)$ is an orthogonal
Hilbert representation of
\[
G:=I\times O(3)\times O(2).
\]
Indeed, we have for $u\in H_{2\pi}^{1}(\mathbb{R},\mathscr V)$ and
$(\sigma,A)\in I\times O(3)$ (see \eqref{eq:act1})
\begin{align}
\left(  \sigma,A\right)  u(t)  &  =(\sigma,A)u(t),\label{eq:ac}\\
e^{i\tau}u(t)  &  =u(t+\tau),\nonumber\\
\kappa u(t)  &  =u(-t).\nonumber
\end{align}

It is useful to identify a $2\pi$-periodic function $u:\mathbb{R}\rightarrow
V$ with a function $\widetilde{u}:S^{1}\rightarrow\mathscr V$ via the map
{$\mathfrak{e}(\tau)=e^{i\tau}:\mathbb{R}$}$\rightarrow S^{1}$. Using this
identification, we will write $H^{1}(S^{1},\mathscr V)$ instead of $H_{2\pi
}^{1}(\mathbb{R},\mathscr V)$. Let
\[
\Omega:=\{u\in H^{1}(S^{1},\mathscr V):u(t)\in\Omega_{o}\}.
\]
We define $J:\mathbb{R}\times\Omega\rightarrow\mathbb{R}$ by
\begin{equation}
J(\lambda,u):=\int_{0}^{2\pi}\left[  \frac{1}{2}|\dot{u}(t)|^{2}-\lambda
^{2}V(u(t))\right]  dt. \label{eq:var-1}%
\end{equation}
Then, the system \eqref{eq:mol1} can be written as the following variational
equation
\begin{equation}
\nabla_{u}J(\lambda,u)=0,\quad(\lambda,u)\in\mathbb{R}\times\Omega.
\label{eq:bif1}%
\end{equation}

Consider $u_{o}\in\mathscr V$ -- the equilibrium point of \eqref{eq:mol} (i.e.
symmetric ground state) described in previous section. Then $u_{o}$ is a
critical point of $J$. We are interested in finding non-stationary $2\pi
$-periodic solutions bifurcating from $u_{o}$, i.e. non-constant solutions to
system \eqref{eq:bif1}. We consider the orbit $G(u_{o})$ of $u_{o}$ in
$H^{1}(S^{1},\mathscr V)$. We denote by $\mathcal{S}_{o}$ the slice to
$G(u_{o})$ in $H^{1}(S^{1},\mathscr V)$. We consider the $G_{u_{o}}$-invariant
restriction $J:\mathbb{R}\times\left(  \mathcal{S}_{o}\cap\Omega\right)
\rightarrow\mathbb{R}$ of $J$ to the set $\mathcal{S}_{o}\cap\Omega$. This
restriction will allow us to apply the Slice Criticality Principle (see
Theorem \ref{thm:SCP}) in order to compute the gradient equivariant degree of
$\nabla J_{\lambda}$ on a small neighborhood $\mathscr U$ of $G(u_{o})$
needed for evaluation of the equivariant invariant $\omega_{G}(\lambda)$.

Consider the operator $L:H^{2}(S^{1};\mathscr V)\rightarrow L^{2}%
(S^{1};\mathscr V)$, given by
\[
Lu=-\ddot{u}+u
\]
for $u\in H^{2}(S^{1},\mathscr
V)$. Then the inverse operator $L^{-1}$ exists and is bounded. Let
$j:H^{2}(S^{1};\mathscr V)\rightarrow H^{1}(S^{1},\mathscr V)$ be the natural
embedding operator. Clearly, $j$ is a compact operator. Then, one can easily
verify that
\begin{equation}
\nabla_{u}J(\lambda,u)=u-j\circ L^{-1}(\lambda^{2}\nabla V(u)+u),
\label{eq:gradJ}%
\end{equation}
where $u\in H^{1}(S^{1},\mathscr V)$. Consequently, the bifurcation problem
\eqref{eq:bif1} is equivalent to $\nabla_{u}J(\lambda,u)=0$. Moreover, we
have
\begin{equation}
\nabla_{u}^{2}J(\lambda,u_{o})v=v-j\circ L^{-1}(\lambda^{2}\nabla^{2}%
V(u_{o})v+v)~, \label{eq:D2J}%
\end{equation}
where $v\in H^{1}(S^{1},\mathscr V)$.

Notice that
\[
\mathscr A(\lambda):=\nabla_{u}^{2}J(\lambda,u_{o})|_{\mathcal{S}_{o}%
}:\mathcal{S}_{o}\rightarrow\mathcal{S}_{o}.
\]
Thus, by implicit function theorem, $G(u_{o})$ is an isolated orbit of
critical points, whenever $\mathscr A(\lambda)$ is an isomorphism. Therefore,
if a point $(\lambda_{o},u_{o})$ is a bifurcation point for \eqref{eq:bif1},
then $\mathscr A(\lambda_{o})$ cannot be an isomorphism. In such case we
define
\[
\Lambda:=\{\lambda>0:\mathscr A(\lambda_{o})\text{ is not an isomorphism}\}~,
\]
and call this set the \textit{critical set} for the trivial solution $u_{o}$.

\subsection{Critical Numbers}

Consider the $S^{1}$-action on $H^{1}(S^{1},\mathscr V)$, where $S^{1}$ acts
on functions by shifting the argument (see \eqref{eq:ac}). Then, $(H^{1}%
(S^{1},\mathscr V))^{S^{1}}$ is the space of constant functions, which can be
identified with the space $\mathscr V$, i.e.,
\[
H^{1}(S^{1},\mathscr V)=\mathscr V\oplus\mathscr W,\quad\mathscr W:=\mathscr
V^{\perp}.
\]
Then, the slice $\mathcal{S}_{o}$ in $H^{1}(S^{1},\mathscr V)$ to the orbit
$G(u_{o})$ at $u_{o}$ is exactly
\[
\mathcal{S}_{o}=S_{o}\oplus\mathscr W.
\]
Consider the $S^{1}$-isotypical decomposition of $\mathscr W$, i.e.,
\[
\mathscr W=\overline{\bigoplus_{l=1}^{\infty}\mathscr W_{l}},\quad\mathscr
W_{l}:=\{\cos(l\cdot)\mathfrak{a}+\sin(l\cdot)\mathfrak{b}:\mathfrak{a}%
,\,\mathfrak{b}\in\mathscr V\}
\]
In a standard way, the space $\mathscr W_{l}$, $l=1,2,\dots$, can be naturally
identified with the complexification  $\mathscr V^{\mathbb{C}}$ on which $S^{1}$ acts by
$l$-folding,
\[
\mathscr W_{l}=\{e^{il\cdot}z:z\in\mathscr V^{\mathbb{C}}\}.
\]

Since the operator $\mathscr A(\lambda)$ is $G_{u_{o}}$-equivariant with
\[
G_{u_{o}}=\tilde{I}\times O(2),
\]
it is also $S^{1}$-equivariant and thus $\mathscr A(\lambda)(\mathscr
W_{l})\subset\mathscr W_{l}$. Using the $\tilde{I}$-isotypical decomposition
of $\mathscr V^{\mathbb{C}}$, we have the $G_{u_{o}}$-invariant decomposition
\[
\mathscr W_{l}=\bigoplus_{j=1}^{46}\mathcal{V}_{{j},l},\quad  \mathcal{V}_{{j},l}:=\{e^{il\cdot}z:z\in E(\mu_j)^\bc \},
\]
where $\mathcal{V}_{n_{j},l}=\mathcal V_{n_j}^\bc=\bc\otimes_\br \mathcal V_{n_j}$ is the $I\times O(2)$-irreducible representation with  $O(2)$ acting on $\bc$ by $l$-folding and complex conjugation.
 We have
\[
\mathscr A(\lambda)|_{\mathcal{V}_{{j},l}}=\left(  1-\frac{\lambda^{2}%
\mu_{j}+1}{l^{2}+1}\right)  \id.
\]
Thus $A(\lambda_{o})|_{\mathcal{V}_{{j},l}}=0$ if and only if
$\lambda_{o}^{2}=l^{2}/\mu_{j}$ for some $l=1,2,3,\dots$ and $j=0,1,2, \dots, 46$.

We  will write 
\[
\lambda_{j,l}=\frac{l}{\sqrt{\mu_{j}}}
\]
to denote the critical numbers in $\Lambda$. Then the critical set for the equilibrium $u_{o}$ of system \eqref{eq:mol}
is
\[
\Lambda= \{ \lambda_{j,l}:j=0,...,46,\quad l=1,2,3,\dots \} .
\]

Let us point out that in the case of isotypical resonances,  the critical numbers may not be  uniquely identified by indices $(j,l)$. The first and last critical numbers for $l=1$ are
$\lambda_{1,1}=.07526\,3$ and $\lambda_{46,1}=0.573\,18$, respectively. We
computed numerically (with precision $10^{-5}$) all the different values $\lambda_{j,l}$ from $\lambda_{1,1}$ to
$\lambda_{46,1}$. We obtain that among these approximations there is no-resonance
with harmonic critical number from $\lambda_{1,1}$ to $\lambda_{46,1}$, i.e.,%
\begin{equation}\label{eq:reson-no}
\lambda_{1,1}<\lambda_{2,1}<\lambda_{3,1}<\lambda_{4,1}%
<...<\lambda_{5,7}<\ \lambda_{26,3}<\lambda_{21,4}<\lambda_{27,3}%
<\lambda_{46,1}. 
\end{equation}
Therefore, a plausible assumption under the numerical evidence is that all the eigenvalues $\lambda_{j,1}$ are isotypical non-resonant for  $j=1,...,46$.

\subsection{Conjugacy Classes of Subgroups in $I\times O(2)$}\label{sec:conjugacy}

In order to simplify the notation, in what follows, instead of using the symbol $\tilde{I}$, we will write $I$. Under this notation the isotropy group $G_{u_o}$ is
\[
G_{u_{o}}=I\times O(2).
\]
The notation in this section is useful to obtain the classification of all
conjugacy classes $(\mathscr H)$ of closed subgroups in $I\times O(2)$.

The  representatives of the conjugacy classes of the subgroups in $A_5\times \bz_2$ consisting of proper nontrivial subgroups of $A_5$ are:
\begin{align*}
\bz_2&=\{ ((1),1),\, ((12)(34),1)\},\\
\bz_3&=\{ ((1),1)\,, ((123),1),\, ((132),1)\},\\
V_4&=\{((1),1),\, ((12)(34),1),\,  ((13)(24),1),\, ((23)(14),1)\},\\
\bz_5&=\{((1),1), \,((12345),1),\, ((13524),1),\,  ((14253),1),\, ((15432),1)\},\\
D_3&=\{((1),1),\, ((123),1),\, ((132),1),\, ((12)(45),1),\, ((13)(45),1),\, ((23)(45),1)\},\\
A_4&=\{((1),1),\, ((12)(34),1),\, ((13)(24),1),\, ((14)(23),1),\,((123),1),\,  ((132),1),\,   ( (124),1),\\
 &\hskip.5cm ((142),1,\,  ((134),1),\,   ((143),1),\, ( (234),1),\, ((243),1)\},\\
D_5&=\{((1),1),\, ((12345),1),\, ((13524),1),\,  ((14253),1),\, ((15432),1),\, ((12)(35),1),\, ((13)(45),1),\\
&\hskip.5cm ((14)(23),1),
 \,( (15)(24),1),\,  ((25)(34)\}.
\end{align*}

The  representatives of the additional conjugacy classes of the subgroups in $A_5\times \bz_2$ will be used to describe the symmetries of nonlinear vibrations. Besides of the product subgroups  $H^{p}:=H\times\mathbb{Z}_{2}$, we have the also following  twisted subgroups $H^\varphi$ of $A_5\times \bz_2$ (where $H$ is a subgroup of $A_5$):
{\small\begin{align*}
\bz_2^z&=\Big\{\big((1),1\big),\, \big((12)(34),-1\big)\Big\},\\
V_4^z&=\Big\{\big((1),1\big),\, \big((12)(34),-1\big),\, \big((13)(24),-1\big),\,  \big((23)(14),1\big)\Big\},\\
D_3^z&=\Big\{\big((1),1\big),\, \big((123),1\big),\, \big((132),1\big),\,\big((12)(45),-1\big),\, \big((13)(45),-1\big),\\
&\hskip.5cm \, \big((23)(45),-1\big)\Big\},\\
D_5^z&=\Big\{\big((1),1\big),\,  \big((12345),1\big),\, \big((13524),1\big),\, \big(14253),1\big),\,  \big((15432),1\big),\\
&\hskip.5cm \, \big((12)(35),-1\big),\, \big((13)(45),-1\big),\, \big((14)(23),-1\big),\,  \big((15)(24),-1\big),\\
&\hskip.5cm \, \big((25)(34),-1\big)\Big\}.
\end{align*}} With these definitions the subconjugacy lattice for $A_5\times \bz_2$ is shown in Figure \ref{fig:sub-A5}.

\begin{figure}
\vglue-4cm
\begin{center}
\includegraphics[height=15cm]{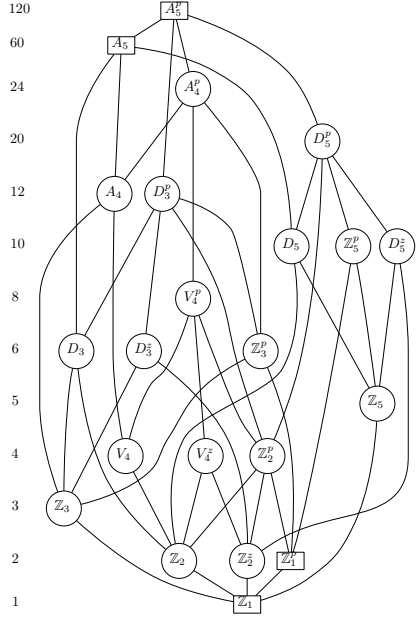}
\end{center}
\vskip-3.4cm
\caption{Lattice of conjugacy classes of subgroups in $A_5\times \bz_2$. The square boxes indicate that the related subgroup is normal in $A_5\times \bz_2$.}\label{fig:sub-A5}
\end{figure}

The result (see \cite{DaKr,Goursat}) provides a description
of subgroups of the product group $I\times O(2)$. Namely, for any subgroup $\mathscr
H$ of the product group $I\times O(2)$, there exist subgroups $H\leq I$  and $K\leq O(2)$, a group $L$ and two epimorphisms $\varphi
:H\rightarrow L$ and $\psi:K\rightarrow L$ such that
\[\mathscr H=\{(h,k)\in H\times K:\varphi(h)=\psi(k)\}.\]
In order to make the notation self-contained, we will
assume that $L=K/\ker(\psi)$, so $\psi:K\rightarrow L$ is evidently the
natural projection. On the other hand, the group $L$ can be naturally
identified with a finite subgroup of $O(2)$ being either $D_{n}$ or
$\mathbb{Z}_{n}$. Since we are interested in describing conjugacy
classes of $\mathscr H$, we can identify different epimorphisms $\varphi,\psi:H\rightarrow
L$ by indicating
\[
Z=\text{Ker\thinspace}(\varphi)\quad\text{ and }\quad L=K/\ker(\psi).
\]

Therefore, to identify $\mathscr H$ we will
write
\begin{equation}
\mathscr H=:H{\prescript{Z}{}\times_{L}^{m}}K~, \label{eq:amalg}%
\end{equation}
where $H$ and $Z$ are subgroups of $I$ and $m$ is a number used to identify groups in different conjugacy classes.
In the case that all the epimorphisms $\varphi$ with the kernel $Z$ are
conjugate, there is no need to use the number $m$ in \eqref{eq:amalg}, so we
will simply write $\mathscr H=H{\prescript{Z}{}\times_{L}K}$. In addition, in
the case that all epimorphisms $\varphi$ from $H$ to $L$ are conjugate, we can also
omit the symbol $Z$, i.e. we will write $\mathscr H=H\times_{L}K$.

\subsection{Bifurcation Theorem}

\begin{theorem}
\label{th:main} Assume that the critical numbers $\lambda_{j,1}\in\Lambda$,
$j=1,2,\dots,46$, for the system \eqref{eq:bif1} are isotypically non-resonant. Then, there
exist multiple global bifurcations of solutions from the critical number $\lambda_{j,1}$ corresponding to the
irreducible representation $V_{n_{j}}$ in Table 3:

\begin{itemize}
\item For $n_{j}=1$ there exists a $G$-orbit of a branch of periodic solutions
with the orbit type ${(\amal{{A_5^p}}{D_{1}}{}{}{})}$;

\item For $n_{j}=2$ there exist $G$-orbits of branches of periodic solutions
with the orbit types ${(\amal{{D_3^p}}{D_{2}}{\bz_{2}}{{\bz_3^p}}{})}$,
${(\amal{{V_4^p}}{D_{2}}{\bz_{2}}{{\bz_2^p}}{})}$,
${(\amal{{A_4^p}}{D_{1}}{}{}{})}$, ${(\amal{{D_3^p}}{D_{1}}{}{}{})}$,
${(\amal{{D_5^p}}{D_{5}}{D_{5}}{{\bz_1^p}}{{1}})}$,
${(\amal{{D_5^p}}{D_{5}}{D_{5}}{{\bz_1^p}}{{2}})}$,
${(\amal{{D_3^p}}{D_{3}}{D_{3}}{{\bz_1^p}}{})}$;

\item For $n_{j}=3$ there exist $G$-orbits of branches of periodic solutions
with the orbit types ${(\amal{{V_4^p}}{D_{2}}{\bz_{2}}{{\bz_2^p}}{})}$,
${(\amal{{D_5^p}}{D_{1}}{}{}{})}$, ${(\amal{{D_3^p}}{D_{1}}{}{}{})}$,
${(\amal{{D_5^p}}{D_{5}}{D_{5}}{{\bz_1^p}}{{1}})}$,
${(\amal{{D_5^p}}{D_{5}}{D_{5}}{{\bz_1^p}}{{2}})}$,
${(\amal{A_4^p}{\bz_{3}}{\bz_3}{V_4^p}{})}$;

\item For $n_{j}=4$ there exist $G$-orbits of branches of periodic solutions
with the orbit types ${(\amal{{D_5^p}}{D_{2}}{\bz_{2}}{{\bz_5^p}}{})}$,
${(\amal{{D_3^p}}{D_{2}}{\bz_{2}}{{\bz_3^p}}{})}$,
${(\amal{{V_4^p}}{D_{2}}{\bz_{2}}{{\bz_2^p}}{})}$,
${(\amal{{D_5^p}}{D_{5}}{D_{5}}{{\bz_1^p}}{{1}})}$,
${(\amal{{D_3^p}}{D_{3}}{D_{3}}{{\bz_1^p}}{})}$,

\item For $n_{j}=5$ there exist $G$-orbits of branches of periodic solutions
with the orbit types ${(\amal{{D_5^p}}{D_{2}}{\bz_{2}}{{\bz_5^p}}{})}$,
${(\amal{{D_3^p}}{D_{2}}{\bz_{2}}{{\bz_3^p}}{})}$,
${(\amal{{V_4^p}}{D_{2}}{\bz_{2}}{{\bz_2^p}}{})}$,
${(\amal{{D_5^p}}{D_{5}}{D_{5}}{{\bz_1^p}}{{2}})}$,
${(\amal{{D_3^p}}{D_{3}}{D_{3}}{{\bz_1^p}}{})}$;

\item For $n_{j}=-1$ here exists a $G$-orbit of a branch of periodic solutions
with the orbit type ${(\amal{{A_5^p}}{D_{2}}{\bz_{2}}{{A_5}}{})}$;

\item For $n_{j}=-2$ there exist $G$-orbits of branches of periodic solutions
with the orbit types ${(\amal{{A_4^p}}{D_{2}}{\bz_{2}}{{A_4}}{})}$,
${(\amal{{D_3^p}}{D_{2}}{\bz_{2}}{{D_3^z}}{})}$,
${(\amal{{D_3^p}}{D_{2}}{\bz_{2}}{{D_3}}{})}$,\break
${(\amal{{V_4^p}}{D_{2}}{\bz_{2}}{{V_4^z}}{})}$,
${(\amal{{D_5^p}}{D_{10}}{D_{10}}{{\bz_1}}{{1}})}$,
${(\amal{{D_5^p}}{D_{10}}{D_{10}}{{\bz_1}}{{2}})}$,
${(\amal{{D_3^p}}{D_{6}}{D_{6}}{{\bz_1}}{})}$,

\item For $n_{j}=-3$ there exist $G$-orbits of branches of periodic solutions
with the orbit types ${(\amal{{D_5^p}}{D_{2}}{\bz_{2}}{{D_5}}{})}$,
${(\amal{{D_3^p}}{D_{2}}{\bz_{2}}{{D_3}}{})}$,
${(\amal{{V_4^p}}{D_{2}}{\bz_{2}}{{V_4^z}}{})}$,\break
${(\amal{{D_5^p}}{D_{10}}{D_{10}}{{\bz_1}}{{1}})}$,
${(\amal{{D_5^p}}{D_{10}}{D_{10}}{{\bz_1}}{{2}})}$,
${(\amal{A_4^p}{\bz_{6}}{\bz_6}{V_4}{})}$;

\item For $n_{j}=-4$ there exist $G$-orbits of branches of periodic solutions
with the orbit types ${(\amal{{D_5^p}}{D_{2}}{\bz_{2}}{{D_5^z}}{})}$,
${(\amal{{D_3^p}}{D_{2}}{\bz_{2}}{{D_3^z}}{})}$,
${(\amal{{V_4^p}}{D_{2}}{\bz_{2}}{{V_4^z}}{})}$,\break
${(\amal{{D_5^p}}{D_{10}}{D_{10}}{{\bz_1}}{{1}})}$,
${(\amal{{D_3^p}}{D_{6}}{D_{6}}{{\bz_1}}{})}$;

\item For $n_{j}=-5$ there exist $G$-orbits of branches of periodic solutions
with the orbit types ${(\amal{{D_5^p}}{D_{2}}{\bz_{2}}{{D_5^z}}{})}$,
${(\amal{{D_3^p}}{D_{2}}{\bz_{2}}{{D_3^z}}{})}$,
${(\amal{{V_4^p}}{D_{2}}{\bz_{2}}{{V_4^z}}{})}$,\break
${(\amal{{D_5^p}}{D_{10}}{D_{10}}{{\bz_1}}{{2}})}$,
${(\amal{{D_3^p}}{D_{6}}{D_{6}}{{\bz_1}}{})}$.
\end{itemize}
\end{theorem}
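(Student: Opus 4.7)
The plan is to reduce the global bifurcation problem to a computation of the equivariant topological invariant $\omega_G(\lambda_{j,1})$ defined in \eqref{eq:top-inv}, and then read off the maximal orbit types from its expansion in the Euler ring $U(G)$. First, I would invoke the Slice Criticality Principle to replace the study of critical points of $J_\lambda$ on a neighborhood $\mathscr U$ of the orbit $G(u_o)$ by the study of critical points of the $G_{u_o}$-invariant restriction $J_\lambda : \mathbb R\times (\mathcal S_o\cap \Omega)\to\mathbb R$, where $G_{u_o}=I\times O(2)$. Using \eqref{eq:gradJ}, $\nabla_u J(\lambda,u)=u-j\circ L^{-1}(\lambda^2\nabla V(u)+u)$ is a completely continuous gradient field on the slice, so the gradient $G$-equivariant degree is well defined, and the functional equation $\nabla_u J(\lambda,u)=0$ falls within the scope of the degree theory recalled in Appendix \ref{sec:equi-degree}.

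Next I would exploit the isotypical decomposition $\mathcal S_o\cap\mathscr W_l=\bigoplus_{j=1}^{46}\mathcal V_{j,l}$ together with the explicit formula
\[
\mathscr A(\lambda)|_{\mathcal V_{j,l}}=\Bigl(1-\frac{\lambda^2\mu_j+1}{l^2+1}\Bigr)\id,
\]
which shows that $\mathscr A(\lambda)$ changes the sign of exactly one isotypical component $\mathcal V_{n_j,1}$ as $\lambda$ crosses $\lambda_{j,1}=1/\sqrt{\mu_j}$ (here the isotypical non-resonance assumption \eqref{eq:reson-no} is critical: it guarantees that no other $\mathcal V_{n_k,l}$ becomes singular at the same $\lambda_{j,1}$). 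Combined with the multiplicativity property of the gradient equivariant degree, this yields the factorization
\[
\omega_G(\lambda_{j,1})=\bigl(\nabla_G\text{-Deg}(-\id,B(\mathcal V_{n_j,1}))-(I\times O(2))\bigr)\star \nabla_G\text{-Deg}(\nabla^2 J(\lambda_{j,1},u_o)|_{\mathcal M},B(\mathcal M)),
\]
where $\mathcal M$ is the complement of $\mathcal V_{n_j,1}$ in $\mathcal S_o$ on which $\mathscr A(\lambda_{j,1})$ is a (non-singular) isomorphism. The second factor is a unit in $U(G)$ (its coefficient at the full-group orbit type is $\pm 1$), so the orbit types appearing with nonzero coefficient in $\omega_G(\lambda_{j,1})$ coincide with those in the \emph{basic gradient degree} $\deg_{\mathcal V_{n_j,1}}$ attached to the irreducible $G$-representation $\mathcal V_{n_j,1}$.

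The third step is the heart of the argument: compute the basic gradient degrees $\deg_{\mathcal V_{n,l}}$ for $n=\pm1,\dots,\pm 5$ and $l=1$. Using the character Table \ref{tab:I}, the subgroup lattice in Figure \ref{fig:sub-A5}, and the amalgamated notation $H\prescript{Z}{}\times_L^m K$ from Section \ref{sec:conjugacy}, I would enumerate all conjugacy classes of subgroups $\mathscr H\leq I\times O(2)$ with $\dim(\mathcal V_{n,1})^{\mathscr H}>0$, compute the fixed-point dimensions, and then solve the standard recursive system (cf.\ Appendix \ref{sec:equi-degree}) for the coefficients of $\deg_{\mathcal V_{n,l}}$ in the Euler ring $U(I\times O(2))$. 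The maximal orbit types appearing with nonzero coefficient are precisely those listed in each item of the theorem; by the Existence Property of the equivariant degree, each such coefficient forces a branch of periodic solutions with exactly that isotropy.

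Finally, globality follows from the equivariant Rabinowitz alternative \eqref{eq:int-global}: for any connected branch $\mathcal C$ passing through $(\lambda_{j,1},u_o)$, the finite sum $\sum_k\omega_G(\lambda_k)$ over the bifurcation points on $\mathcal C$ must vanish, so the nonvanishing of the coefficient of the maximal orbit type in $\omega_G(\lambda_{j,1})$, together with the fact that maximal orbit types are preserved along branches, prevents $\mathcal C$ from being compact in $\mathbb R_+\times H^1(S^1;\Omega_o)$ unless it returns to another bifurcation point. The main obstacle in carrying out this program is purely combinatorial: the subgroup lattice of $I\times O(2)$ is large, and reliably identifying which amalgamated subgroups $\amal{H^p}{D_l}{Z}{K}{m}$ arise as \emph{maximal} isotropies in each $\mathcal V_{\pm n,1}$ requires a careful case-by-case fixed-point dimension calculation; this is the step where the bookkeeping is done and where the enumeration in the theorem's bullet list is produced.
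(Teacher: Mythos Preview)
Your overall architecture is correct and matches the paper's: reduce to the slice via Theorem~\ref{thm:SCP}, linearize, factor the topological invariant $\omega_G(\lambda_{j,1})$ using the multiplicativity of the gradient degree, and then read off maximal orbit types from the basic degrees $\nabla\text{-deg}_{\mathcal V_{n_j,1}}$ listed in Appendix~\ref{sec:basic}. The globality conclusion via \eqref{eq:int-global} is also as in the paper.

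There is, however, a genuine gap at the step where you pass from the factorization to the conclusion. You write that the second factor ``is a unit in $U(G)$ (its coefficient at the full-group orbit type is $\pm 1$), so the orbit types appearing with nonzero coefficient in $\omega_G(\lambda_{j,1})$ coincide with those in the basic gradient degree.'' Neither assertion is correct. Having leading coefficient $\pm 1$ does not make an element of the Euler ring $U(I\times O(2))$ invertible; indeed the paper computes that $\bigl(\nabla\text{-deg}_{\mathcal V_{3,1}}\bigr)^k=(I\times O(2))-k(\amal{A_4^p}{\bz_{3}}{\bz_3}{V_4^p}{})+\dots$, so already a single basic degree fails to be a unit in $U(G)$. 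And even in $A(G)$, where the basic degrees \emph{are} involutions, multiplication by such an element does not in general preserve the set of orbit types with nonzero coefficient. So the survival of the maximal-orbit-type coefficients in $\omega_G(\lambda_o)$ is not free; it is precisely the nontrivial content of the proof.

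What the paper does instead is a cancellation argument. For a maximal orbit type $(H)$ with $\dim W(H)=0$, the recurrence \eqref{eq:rec-Brouwer} forces $n_H=\frac{(-1)^k-1}{|W(H)|}\in\{-1,-2\}$, and $(H)\ast(H)=|W(H)|\,(H)+\dots$ by \eqref{eq:rec-coef}. One then checks the identity $2n_H+n_H^2|W(H)|=0$, which means that in a product of basic degrees the $(H)$-coefficients cancel \emph{in pairs}. Since the degrees at $\lambda_-$ and $\lambda_+$ differ by exactly one factor $\nabla\text{-deg}_{\mathcal V_{n_{j_o},1}}$ carrying a nonzero $(H)$-term, exactly one of them has an even number of such factors, and the difference $\omega_G(\lambda_o)$ retains $\pm n_H(H)$. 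The two exceptional maximal types $(\amal{A_4^p}{\bz_{3}}{\bz_3}{V_4^p}{})$ and $(\amal{A_4^p}{\bz_{6}}{\bz_6}{V_4}{})$ have $\dim W(H)=1$, so they are invisible to $\pi_0$ and the above argument does not apply; for these the paper uses the Euler-ring homomorphism $\Psi:U(I\times O(2))\to U(I\times S^1)$ to show $(H)\ast(H)=0$, whence the $(H)$-coefficient in the $k$-th power is $-k$ and again survives in $\omega_G(\lambda_o)$. Your proposal does not distinguish these two regimes and in particular gives no mechanism for the $\dim W(H)=1$ cases. The ``purely combinatorial'' obstacle you anticipate (enumerating isotropies and fixed-point dimensions) is handled by the GAP computation in Appendix~\ref{sec:basic}; the real obstacle is the algebra in $U(G)$ just described.
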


\begin{proof}
The critical numbers for system \eqref{eq:bif1} are $\lambda_{j,l}=l/\sqrt
{\mu_{j}}$ for $l=1,2,3,\dots,$ and $j=1,2,\dots,46$, where $\mu_{j}$
(together with its isotypical types) are listed in Table \ref{tab:crit-values}. We assumed (under the numerical evidence) that the critical frequencies $\lambda_{j,1}^{-1}$ are
isotypically non-resonant. Based on the ideas explained in section \ref{sec:1}, then
$\lambda_{o}:=\lambda_{j_{o},1}$ is an isolated critical point in $\Lambda$.
That is, there are $\lambda_{-}<\lambda_{o}<\lambda_{+}$ such that
$[\lambda_{-},\lambda_{+}]\cap\Lambda=\{\lambda_{o}\}$. Moreover, there exists
an isolating $G$-neighborhood $\mathscr U$ of $G(u_{o})$ such that no other
critical orbits of $J_{\lambda_{\pm}}$ belong to $\overline{\mathscr U}$.
Thus, we can define the topological invariant $\omega_{G}(\lambda_{o})$ by
\eqref{eq:top-inv}. Then, by the properties of the gradient equivariant degree,
if
\[
\omega_{G}(\lambda_{o})=n_{1}(H_{1})+n_{2}(H_{2})+\dots+n_{m}(H_{m})
\]
is non-zero, i.e. $n_{j}\not =0$ for a $j=1,2,\dots,m$, then there exists a
bifurcating branch of nontrivial solutions to \eqref{eq:bif1} from the orbit
$\{\lambda_{o}\}\times G(u_{o})$ with symmetries at least $(H_{j})$.

Next, by Theorem \ref{thm:SCP},
\[
\nabla_{G}\text{-deg}(\nabla J_{\lambda_{\pm}},\mathscr U)=\Theta\left(
\nabla_{G_{u_{o}}}\text{-deg}(\nabla{ J}_{\lambda_{\pm}},\mathscr
U\cap\mathscr S_{o})\right)  ,
\]
where $G=I\times O(3)\times O(2)$, $G_{u_{o}}=\tilde{I}\times O(2)$ and
$\Theta:U(G_{u_{o}})\rightarrow U(G)$ is the homomorfism given by
$\Theta(H)=(H)$. For convenience, in what follows we will ignore the symbol
$\Theta$. Moreover, by standard linearization technique, we have
\[
\nabla_{G_{u_{o}}}\text{-deg}(\nabla{J}_{\lambda_{\pm}},\mathscr U\cap
\mathscr
S_{o})=\nabla_{G_{u_{o}}}\text{-deg}(\mathscr A_{\lambda_{\pm}},\mathscr
U\cap\mathscr S_{o}).
\]
By \eqref{eq:lin-GdegGrad}, since all the eigenvalues $\mu_{j}$ are
isotopically simple, we have
\begin{align*}
\nabla_{G_{u_{o}}}\text{-deg}(\mathscr A_{\lambda_{-}},\mathscr U\cap
\mathscr S_{o})  &  =\prod_{\left\{  \left(  j,l\right)  \in\mathbb{N}%
^{2}:\lambda_{j,l}<\lambda_{o}\right\}  }\nabla\text{-deg}_{\mathcal{V}%
_{n_{j},l}},\\
\nabla_{G_{u_{o}}}\text{-deg}(\mathscr A_{\lambda_{+}},\mathscr U\cap
\mathscr S_{o})  &  =\nabla\text{-deg}_{\mathcal{V}_{n_{j_{o}},l}}%
\prod_{\left\{  \left(  j,l\right)  \in\mathbb{N}^{2}:\lambda_{j,l}%
<\lambda_{o}\right\}  }\nabla\text{-deg}_{\mathcal{V}_{n_{j},l}},
\end{align*}
where $\nabla\text{-deg}_{\mathcal{V}_{n_{j},l}}$ are gradient $I\times
O(2)$-equivariant basic degrees listed in Appendix \ref{sec:basic}. Therefore,
we obtain
\[
\omega_{G}(\lambda_{o}):=\Big((I\times O(2))-\nabla\text{-deg}_{\mathcal{V}%
_{n_{j_{o}},1}}\Big)\prod_{\left\{  \left(  j,l\right)  \in\mathbb{N}%
^{2}:\lambda_{j,l}<\lambda_{o}\right\}  }\nabla\text{-deg}_{\mathcal{V}%
_{n_{j},l}},
\]
where $(I\times O(2))$ is the unit element in $U(I\times O(2))$. For instance,
the first equivariant invariants are given by
\begin{align*}
\omega_{G}(\lambda_{1,1})  &  =(I\times O(2))-\nabla\text{-deg}_{\mathcal{V}%
_{3,1}}\\
\omega_{G}(\lambda_{2,1})  &  =\nabla\text{-deg}_{\mathcal{V}_{3,1}}%
\ast\Big((I\times O(2))-\nabla\text{-deg}_{\mathcal{V}_{-3,1}}\Big)\\
\omega_{G}(\lambda_{3,1})  &  =\nabla\text{-deg}_{\mathcal{V}_{3,1}}\ast
\nabla\text{-deg}_{\mathcal{V}_{-3,1}}\ast\Big((I\times O(2))-\nabla
\text{-deg}_{\mathcal{V}_{2,1}}\Big).
\end{align*}

We will prove that a maximal orbit type $(H)$ that appears in the gradient
$I\times O(2)$-basic degree $\nabla\text{-deg}_{\mathcal{V}_{n_{j_{o}},1}}$
with non-zero coefficient $n_{H}$,
\[
\nabla\text{-deg}_{\mathcal{V}_{n_{j_{o}},1}}=(I\times O(2))+n_{H}(H)+\dots,
\]
also appears in $\omega_{G}(\lambda_{o})$ with non-zero coefficients.
Hereafter, dots indicate all the remaining terms corresponding to orbit types
strictly smaller than $(H)$. Notice that all such maximal orbit types (which
are indicated in subsection \ref{sec:basic} by red color) belong to $\Phi
_{0}(I\times O(2))$ (i.e. dim\thinspace$W(H)=0$) except for
$(\amal{A_4^p}{\bz_{3}}{\bz_3}{V_4^p}{})$ (in $\nabla\text{-deg}%
_{\mathcal{V}_{3,1}}$) and $(\amal{A_4^p}{\bz_{6}}{\bz_6}{V_4}{}))$ (in
$\nabla\text{-deg}_{\mathcal{V}_{-3,1}}$).

Now, assume that $(H)$ is a maximal orbit type such that dim\thinspace$W(H)=0$
and
\[
\nabla\text{-deg}_{\mathcal{V}_{n,1}}=(I\times O(2))+n_{H}(H)+\dots,
\]
with $n_{H}\not =0$. By maximality of $(H)$ in $\mathcal{V}_{n,1}$, formula
\eqref{eq:rec-Brouwer} gives
\[
n_{H}=\frac{(-1)^{k}-1}{|W(H)|},\quad k:=\text{dim\thinspace}\mathcal{V}%
_{n,1}^{H}.
\]
Then $k$ must be odd and consequently $n_{H}=-1$ when $|W(H)|=2$ or $n_{H}=-2$
when $|W(H)|=1$. Suppose now that $\nabla\text{-deg}_{\mathcal{V}_{\bar{n},1}%
}$ is another (not necessarily different) basic degree containing a non-zero
coefficient for $(H)$, i.e.
\[
\nabla\text{-deg}_{\mathcal{V}_{\bar{n},1}}=(I\times O(2))+n_{H}(H)+\dots.
\]
Then, we have,
\[
\nabla\text{-deg}_{\mathcal{V}_{n,1}}\ast\nabla\text{-deg}_{\mathcal{V}%
_{\bar{n},1}}=(I\times O(2))+2n_{H}(H)+n_{H}^{2}(H)^{2}+\dots
\]
However, by \eqref{eq:rec-coef}, we have $(H)\ast(H)=m_{H}(H)+\dots$, where
\[
m_{H}:=\frac{|W(H)|\cdot|W(H)|}{|W(H)|}=|W(H)|.
\]
Thus
\[
\nabla\text{-deg}_{\mathcal{V}_{n,1}}\ast\nabla\text{-deg}_{\mathcal{V}%
_{\bar{n},1}}=(I\times O(2))+\left(  2n_{H}+n_{H}^{2}m_{H}\right)  (H)+\dots.
\]
One can easily notice that
\[
2n_{H}+n_{H}^{2}m_{H}=0,
\]
for both cases $n_{H}=-1$ and $n_{H}=-2$. Therefore, the coefficient of the
product $\nabla\text{-deg}_{\mathcal{V}_{n,1}}\ast\nabla\text{-deg}%
_{\mathcal{V}_{\bar{n},1}}$ is zero for the group $(H)$. Consequently, since
ether $\nabla_{G_{u_{o}}}\text{-deg}(\mathscr A_{\lambda_{-}},\mathscr
U\cap\mathscr S_{o})$ or $\nabla_{G_{u_{o}}}\text{-deg}(\mathscr
A_{\lambda_{+}},\mathscr U\cap\mathscr S_{o})$ (but not both) contains an even
number of factors $\nabla\text{-deg}_{\mathcal{V}_{n_{i},1}}$ with non-zero
coefficient $n_{H}$ of $(H)$, it follows that their difference also contains
non-zero $\pm n_{H}$ coefficient of $(H)$. Actually, the computation with GAP shows
that $n_{H}=-1$, then in these cases we have%
\[
\omega_{G}(\lambda_{o})=\pm(H)+\dots.\text{.}%
\]

Now, assume that $H=\amal{A_4^p}{\bz_{3}}{\bz_3}{V_4^p}{}$ and consider
$\nabla\text{-deg}_{\mathcal{V}_{3,1}}=(G)+n_{H}(H)+\dots$. Then we have
\[
\nabla\text{-deg}_{\mathcal{V}_{3,1}}\ast\nabla\text{-deg}_{\mathcal{V}_{3,1}%
}=(G)+2n_{H}(H)+n_{H}^{2}(H)^{2}+\dots.
\]
By functoriality property of the gradient equivariant degree we have that the
inclusion $\psi:I\times S^{1}\rightarrow I\times O(2)$ induces the Euler
homomorphism $\Psi:U(I\times O(2))\rightarrow U(I\times S^{1})$ such that
$\Psi(\nabla\text{-deg}_{\mathcal{V}_{3,1}})$ is also a gradient equivariant
basic degree (see \cite{DaKr}). This can be easily computed (cf. \cite{RR}) as
follows,
\begin{align*}
\Psi(\nabla\text{-deg}_{\mathcal{V}_{3,1}}) &  =(I\times S^{1})-(D_{5}%
^{p})-(D_{3}^{p})-(A_{4}^{t_{1}}\times\bz_{2})-(A_{4}^{t_{2}}\times\bz_{2})\\
&  -(V_{4}^{-}\times\bz_{2})-(\bz_{5}^{t_{1}}\times\bz_{2})-(\bz_{5}^{t_{2}%
}\times\bz_{2})+2(\bz_{2}^{p}).
\end{align*}
Thus $n_{H}=-1$. Notice that by \eqref{eq:Euler-hom}, $\Psi
(\amal{A_4^p}{\bz_{3}}{\bz_3}{V_4^p}{})=(A_{4}^{t_{1}}\times\bz_{2}%
)+(A_{4}^{t_{2}}\times\bz_{2})$. As it was shown in \cite{RR}, $(A_{4}^{t_{i}%
}\times\bz_{2})\ast(A_{4}^{t_{j}}\times\bz_{2})=0$. Thus we have
\[
0=\Psi((H)\ast(H))=\Psi(m_{H}(H)+\dots)=m_{H}\Big(((A_{4}^{t_{1}}\times
\bz_{2})+(A_{4}^{t_{2}}\times\bz_{2})\Big),
\]
which implies $m_{H}=0$. Therefore, $(H)\ast(H)=0$ and for $k\in\mathbb{N}$,
\[
\left(  \nabla\text{-deg}_{\mathcal{V}_{3,1}}\right)  ^{k}=(I\times
O(2))-k(H)+\dots.
\]
Clearly,
$\omega_{G}(\lambda_{o})$ has a non-zero coefficient,
\[
\omega_{G}(\lambda_{o})=(H)+\dots.
\]
For $(H)=(\amal{A_4^p}{\bz_{6}}{\bz_6}{V_4}{})$ the proof is similar. This
concludes the proof of our main theorem.
\end{proof}

\begin{remark}\rm
Since all the invariants are $\omega_{G}(\lambda_{o})=(H)+...$ for
$H=(\amal{A_4^p}{\bz_{3}}{\bz_3}{V_4^p}{})$ and
$H=(\amal{A_4^p}{\bz_{6}}{\bz_6}{V_4}{})$, then the sum of $\omega_{G}$'s can
never be zero, i.e. all the connected components $\mathcal{C}$ with symmetries
$(\amal{A_4^p}{\bz_{3}}{\bz_3}{V_4^p}{})$ and
$(\amal{A_4^p}{\bz_{6}}{\bz_6}{V_4}{})$ are non-compact. Similarly, notice that
there is an odd number of irreducible subrepresentations $\mathcal V_{-n}$ in the isotypical component $\mathscr V_{-n}$, for $n=1$, $3$, $4$, $5$,
and the topological invariant is $\omega_{G}(\lambda_{o})=\pm(H)+...$ (for
a maximal group $(H)$). This excludes a possibility that all the branches with the orbit type $(H)$, bifurcating from all the critical points $\lambda_{j,1}$ corresponding to $\mathcal V_{-n}$, are compact.  Thus, for any maximal orbit type $(H)$ in   $\mathcal V_{-n}$ for $n=1,3,4,5$  there exists a non-compact branch   $\mathcal{C}$ with orbit type $(H)$.
\end{remark}

\begin{remark}\rm
All the gradient basic degrees $\nabla\text{-deg}_{\mathcal{V}_{\pm
n,1}}$, which were computed using G.A.P. programming, are included in Appendix
\ref{sec:equi-degree}. These degrees can be used to compute the exact value of topological
invariants $\omega_{G}(\lambda_{o})$ even in the case that $\lambda_{o}$ is isotypically 
resonant, so a bifurcation  result can be established in the resonant case as well. For example, such a resonant case was studied in \cite{BeGa} (to classify the nonlinear modes in a tetrahedral molecule). 
\end{remark}

\section{Description of Symmetries and Numerical results}

\label{sec:symmetries}

For any maximal orbit type $(H)$ in $\mathcal V_{n,1}$ the element $-1\in\mathbb{Z}_{2}<I$ belongs to $ H$, and in $\mathcal  V_{-n,1}$, the element $(-1,-1)\in\mathbb{Z}_{2}\times  \bz_2<I\times S^1$ belongs to $ H$. A solution  $u(t)$ in the fixed point space for a group containing $-1\in\mathbb{Z}_{2}$, satisfies
\[
u_{\tau,k}(t)=-u_{\tau^{-1},k}(t),
\]
while for a group containing $(-1,-1)\in\mathbb{Z}_{2}\times  \bz_2$ satisfies
\[
u_{\tau,k}(t)=-u_{\tau^{-1},k}(t+\pi).
\]
Since the isotropy groups in $\mathcal V_{n,1}$ and $\mathcal V_{-n,1}$ differ only in this element, we only need to describe the symmetries of the maximal groups for
the representations $\mathcal{V}_{n,1}$. 

The existence of the symmetry $\kappa\in O(2)$ in the maximal groups implies
that the solutions are brake orbits,%
\[
u_{\tau,k}(t)=u_{\tau,k}(-t)\text{,}%
\]
i.e., the velocity $\dot{u}$ of all the molecules are zero at times
$t=0,\pi$, i.e., $\dot{u}(0)=\dot{u}(\pi)=0$. We classify the maximal groups in two
classes: the groups that have the element $\kappa\in O(2)$ and the groups that
have the element $\kappa$ coupled with a rotation of $I$. That is, if there is an
element $\gamma\in\mathcal{C}_{2}$ such that $(\gamma,\kappa)$ is in the
second class of groups, then their solutions have the symmetry
\[
u_{\tau,k}(t)=\rho(\gamma)u_{\gamma\tau\gamma^{-1},\gamma^{-1}(k)}(-t).
\]

The maximal orbit type  that does not have a symmetry $(\gamma,\kappa)$ is
$(\amal{A_4^p}{\bz_{6}}{\bz_6}{V_4}{})$ which is the only maximal group (in $\mathscr V_{3,1}$)
with Weyl group of dimension one.
\vs

\subsection{Standing Waves (Brake Orbits)}

In this category we consider the groups that have the element $\kappa\in
O(2)$, which generate the group $D_{1}<O(2)$. 

For the groups%
\[
{({A_{5}^{p}}\prescript{}{}\times_{{}}^{{}}D_{1}),({A_{4}^{p}}%
\prescript{}{}\times_{{}}^{{}}D_{1}),({D_{5}^{p}}\prescript{}{}\times_{{}}%
^{{}}D_{1}),({D_{3}^{p}}\prescript{}{}\times_{{}}^{{}}D_{1})}%
\]
the solutions have the following symmetries at all times: icosahedral symmetries for 
${{A_{5}}}$, tetrahedral symmetries for ${{A_{4}}}$, pentagonal
symmetries for ${{D_{5}}}$ and triangular symmetries for
${{D_{3}}}$.

For the group%
\[
{({D_{3}^{p}}\prescript{{\bz_3^p}}{}\times_{\mathbb{Z}_{2}}^{{}}D_{2})}%
\]
the solutions are symmetric by the $2\pi/3$-rotations of $\mathbb{Z}_{3}%
<D_{3}<I$, while the reflection of $D_{3}<I$ is coupled with the $\pi$-time
shift of $-1\in\mathbb{Z}_{2}<S^{1}$. Therefore, the solutions in three faces have the exact dynamics, but these faces are not symmetric by reflection
such as in the symmetries of ${({D_{3}^{p}}\prescript{}{}\times_{{}%
}^{{}}D_{1})}$.

For the group%
\[
{({V_{4}^{p}}\prescript{{\bz_2^p}}{}\times_{\mathbb{Z}_{2}}^{{}}D_{2}%
)}\text{,}%
\]
the solutions are symmetric by the $\pi$-rotations of $V_{4}<I$, while the
other $\pi$-rotation of $\mathbb{Z}_{2}<V_{4}$ is coupled with the $\pi$-time
shift of $-1\in D_{2}<S^{1}$.

These seven symmetries give solutions which are standing waves in the sense
that each symmetric face has the exact dynamic repeated for all times.

\subsection{Discrete Rotating Waves}

In the groups%
\[
{({D_{5}^{p}}\prescript{{\bz_1^p}}{}\times_{D_{5}}^{1}D_{5})},{({D_{5}^{p}%
}\prescript{{\bz_1^p}}{}\times_{D_{5}}^{2}D_{5})}~,
\]
the spatial dihedral group $D_{5}<I$ is coupled with the temporal group
$D_{5}<O(2)$. Therefore, in these solutions we have $5$ faces with the same
dynamics, but there is a $2\pi/5$-time shift in time between consecutive
faces. In addition, $\kappa$ is coupled with a $\pi$-rotation, i.e., there is
an axis of symmetry in each face. In this sense, the solutions have the
appearance of a discrete rotating wave with a $2\pi/5$ delay along consecutive faces. There are two groups
because there are two different conjugacy classes, $\mathcal{C}_{4}$ and
$\mathcal{C}_{5}$, of $A_{5}$.

Similarly, in the solutions of the group%
\[
{({D_{3}^{p}}\prescript{{\bz_1^p}}{}\times_{D_{3}}^{{}}D_{3})~,}%
\]
we have $3$ faces with the same dynamics, but with a $2\pi/3$-time shift, i.e.,
the solutions have the appearance of a discrete rotating wave in $3$ faces
with a $2\pi/3$-time shift and each face has an axis of symmetry.

For the solutions of the group
\[
(\amal{A_4^p}{\bz_{6}}{\bz_6}{V_4}{})
\]
we have $3$ faces with the same dynamics with a $2\pi/3$-time shift.
Moreover, in these solutions the inversion is coupled with a  $\pi$-time shift
in time. Therefore, there are a total of $6$ faces ($3$ faces and their
inversions) that have the same dynamics but with  $2\pi/6$-time shift. In these
solutions the faces do not have an axis of symmetry, instead there are two
symmetries by a $\pi$-rotation.

\subsection{Numerical results}

In this section, we present the implementation of the numerical continuation of some families of periodic solutions. In order to compute numerically the families of periodic solutions, we use the
Hamiltonian formulation,%
\begin{equation}
\dot{x}=J\nabla H(x),\qquad x=(q,p),\label{ODE}%
\end{equation}
where $H(q,p)=\left\vert p\right\vert ^{2}/2-V(q)$ is the Hamiltonian and $J$
is the symplectic matrix%
\[
J=\left(
\begin{array}
[c]{cc}%
0 & -I\\
I & 0
\end{array}
\right)  \text{.}%
\]

Since the Hamiltonian is invariant by the action of the group $\mathbb{R}%
^{3}$ that acts by translation, $O(3)$ by rotations and $\varphi\in S^{1}$ by
time shift, then the Hamiltonian satisfies the orthogonal relations%
\[
\left\langle H(x),A_{j}(x)\right\rangle =0,
\]
for $j=1,...,7$, where $A_{j}$ are the generators of the groups,%
\begin{align*}
A_{j}(q,p) &  =\partial_{\tau}|_{\tau=0}(q+\tau\mathcal{E}_{j},p)=(\mathcal{E}%
_{j},0),\qquad\mathcal{E}_{j}=(e_{j},...,e_{j})\\
A_{j+3}(q,p) &  =\partial_{\theta}|_{\theta=0}(e^{\theta\mathcal{J}%
}q,e^{\theta\mathcal{J}}p)=(\mathcal{J}_{j}q,\mathcal{J}_{j}p),\qquad
\mathcal{J}_{j}=diag(J_{j},...,J_{j})
\end{align*}
for $j=1,2,3$, and%
\[
A_{7}(q,p)=\partial_{\varphi}|_{\varphi=0}(q,p)(t+\varphi)=J\nabla H\text{.}%
\]

\begin{remark}
Actually, the conserved quantities $G_{j}$ are related to the generator fields
$A_{j}$ by
\[
A_{j}=J\nabla G_{j}.
\]
Using the Poisson bracket, the orthogonality relations are equivalent to%
\[
\{H,G_{j}\}=\left\langle \nabla H,J\nabla G_{j}\right\rangle =\left\langle
\nabla H,A_{j}\right\rangle =0\text{.}%
\]
The explicit conserved quantities are $G_{j}=-p\cdot\mathcal{E}_{j}$,
$G_{j+3}=p^{T}\mathcal{J}_{j}q$, for $j=1,2,3$, and $G_{7}=H$.
\end{remark}

To numerically continue a solution it is necessary to augment the
differential equation with Lagrange multipliers $\lambda_{j}\in\mathbb{R}$ for
$j=1,..,7$,%
\begin{equation}
\dot{x}=J\nabla H(x)+\sum_{j=1}^{7}\lambda_{j}JA_{j}(x)\text{.}\label{ODE2}%
\end{equation}
The solutions of equation \eqref{ODE2} are solutions of the original equations
of motion when the values of the seven parameters are zero. If $A_{j}(x)$ are
linearly independent, a solution $x$ to the equation (\ref{ODE2}) is a
solution to the equation (\ref{ODE}) because 
\[
0=\left\langle \dot{x},JA_{i}(x)\right\rangle =\sum_{j=1}^{7}\lambda
_{j}\left\langle A_{j},A_{i}\right\rangle 
\]
implies that $\lambda_{j}=0$ for $j=1,...,7$.

The period $T=2\pi\lambda$ can be obtained as parameter in equation
\eqref{ODE2} by rescaling time,%
\[
\dot{x}=TJ\nabla H+\sum_{j=1}^{7}\lambda_{j}JA_{j}\text{.}%
\]
Let $\varphi_{t}(x)$ be the flow of this equation. We can define the time one
map (for the rescaled time)\ as
\[
\varphi_{1}(x;\lambda_{1},...,\lambda_{7},T):V\times\mathbb{R}^{7}%
\times\mathbb{R}\rightarrow V\text{,}%
\]
where the period $T$ is a parameter. Therefore, a fixed point of $\varphi
_{1}(x)$ corresponds to a $T$-periodic solutions of the Hamiltonian system.

To numerically continue the fixed points of $\varphi_{1}(x)$ it is necessary
to implement Poincar\'{e} sections. For this we define the augmented map%
\begin{align*}
F(q,p,\lambda_{1},...,\lambda_{7};T) &  :V\times\mathbb{R}^{7}\times
\mathbb{R}\rightarrow V\times\mathbb{R}^{7}\\
F &  =\left(  x-\varphi_{1}(x),A_{j}(\tilde{x})\cdot\left(  x-\tilde
{x}\right)  \right)  .
\end{align*}
Then a solution of $F=0$ is a $T$-periodic solution of the Hamiltonian system.
The restrictions $A_{j}(\tilde{x}%
)\cdot\left(  x-\tilde{x}\right)  =0$ for $j=1,...,7$ represent the
Poincar\'{e} sections, where $\tilde{x}$ is a previously computed solutions on
the family of solutions of $F=0$. This map is a local submersion except for bifurcation points, see \cite{MuAl}.


\begin{figure}[H]
\begin{center}
\resizebox{11.0cm}{!}{\includegraphics{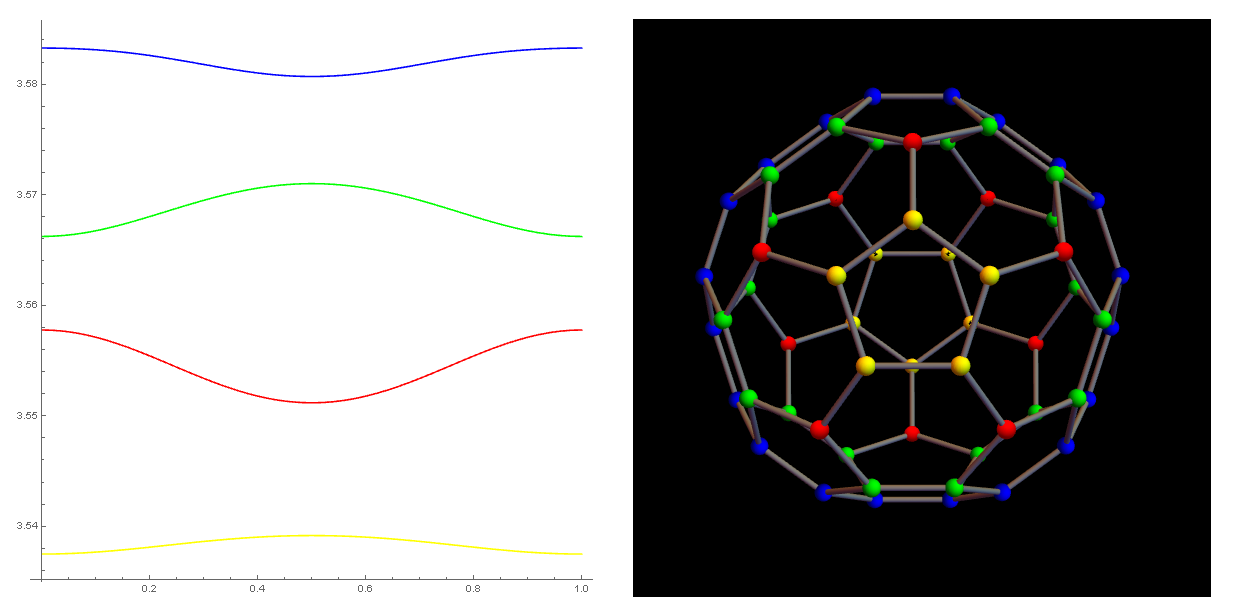} } 
\resizebox{11.0cm}{!}{\includegraphics{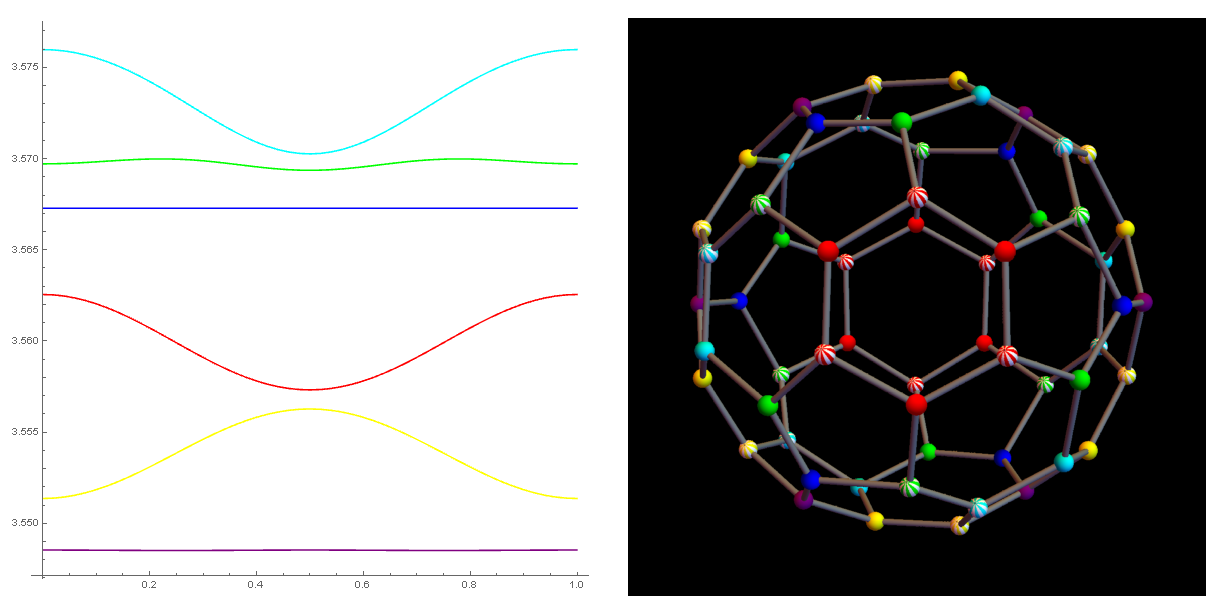} }
 \resizebox{11.0cm}{!}{\includegraphics{e03m1.png} }
\end{center} 
\caption{  Top: Solutions from eigenvalue  $j=2$  with symmetries $({D_{5}^{p}}\prescript{}{}\times_{{}}
^{{}}D_{1})$.
Middle: Solution from eigenvalue  $j=3$  with symmetries $
{({D_{3}^{p}}\prescript{{\bz_3^p}}{}\times_{\mathbb{Z}_{2}}^{{}}D_{2})}$. Bottom: Solution from eigenvalue  $j=3$  with symmetries 
$(\amal{{A_4^p}}{D_{2}}{\bz_{2}}{{A_4}}{})$ }%
\label{fig-1}%
\end{figure}
\begin{figure}[H]
\begin{center}
\resizebox{11cm}{!}{\includegraphics{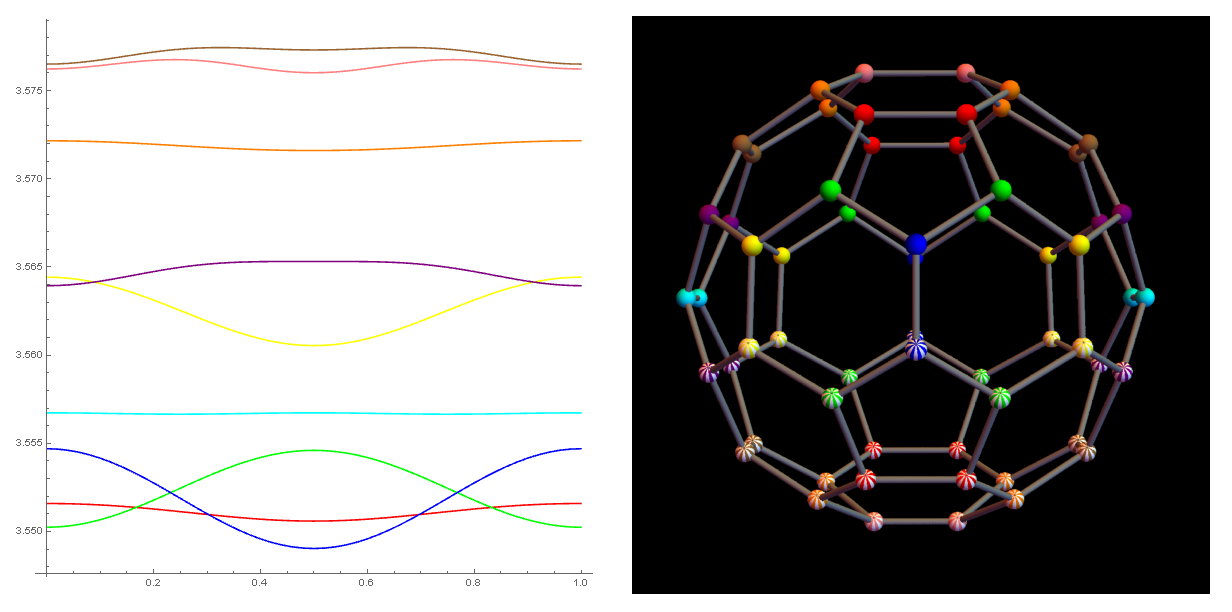} } 
\resizebox{11cm}{!}{\includegraphics{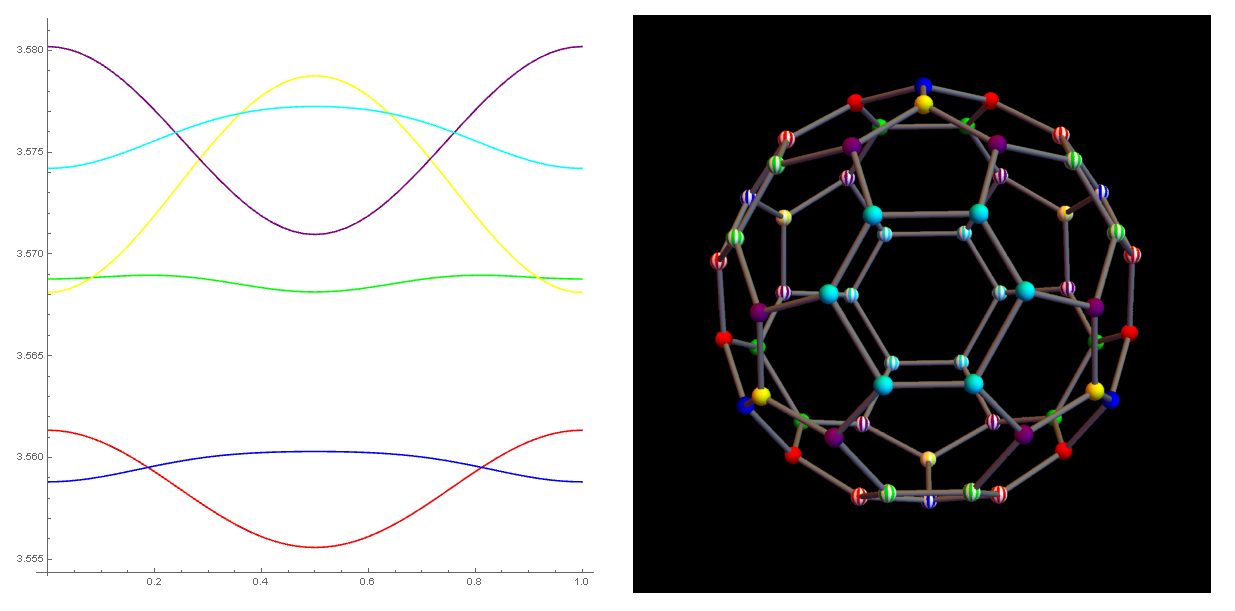} }
 \resizebox{11cm}{!}{\includegraphics{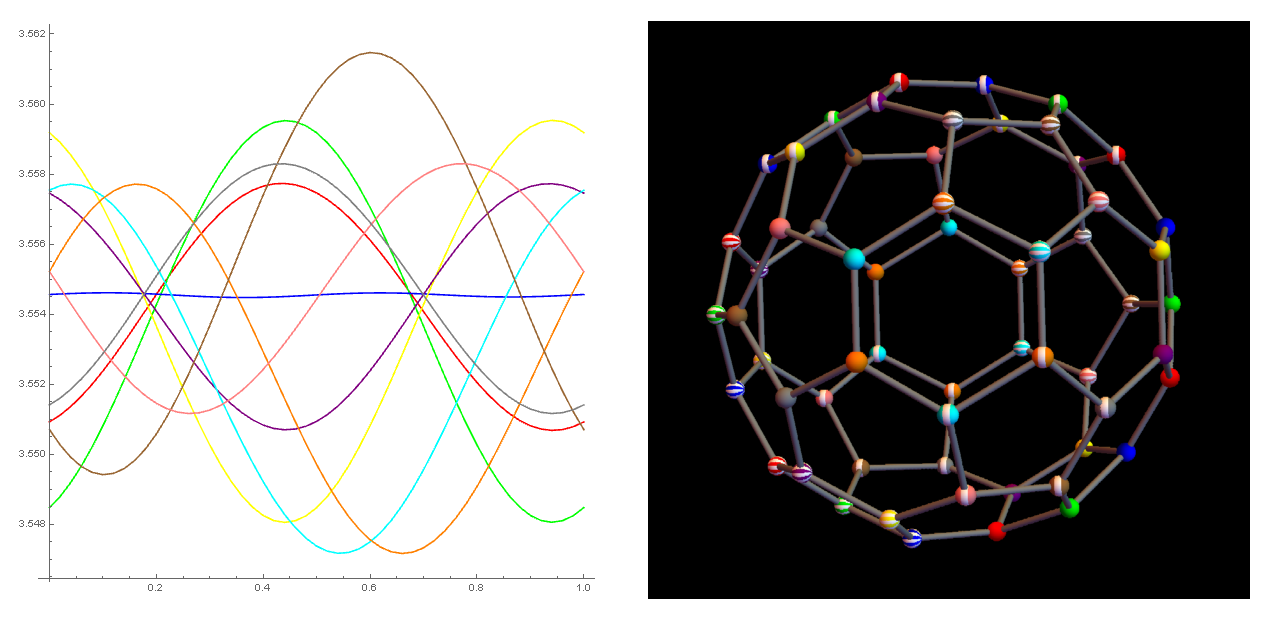} }
\end{center}
\caption{ Top: Solutions from eigenvalue  $j=4$  with symmetries  $(\amal{{V_4^p}}{D_{2}}{\bz_{2}}{{V_4^z}}{})$. Middle: Solutions from eigenvalue  $j=5$  with symmetries $(\amal{{D_3^p}}{D_{6}}{D_{6}}{{\bz_1}}{})$.
 Bottom: Solutions from eigenvalue  $j=9$  with symmetries ${({D_{3}^{p}}\prescript{{\bz_1^p}}{}\times_{D_{3}}^{{}}D_{3})}$. }%
\label{fig-2}%
\end{figure}

The map $\varphi_{1}(x)$ is computed numerically using a Runge-Kutta
integrator.\  A first solution of $F=0$ is obtained by applying a Newton method
in the approximating solution obtained by solving the linearized Hamiltonian system. The family of periodic
solutions is computed numerically using a pseudo-arclength procedure to
continue the solutions of $F=0$.

We present the results of our numerical computations in Figures \ref{fig-1} and \ref{fig-2}.  The position of the atoms in space are in the right columns. The atoms with the same color have oscillations related by a rotation or inversion in $O(3)$. In addition, atoms with the same color but different texture describe oscillations that are related by the inversion coupled with a phase shift in time. In the left columns of Figures \ref{fig-1} and \ref{fig-2} we illustrate the norm of the atoms oscillating in time.

\vskip.3cm
\appendix{\huge \bf Appendix}

\section{Equivariant Gradient Degree}\label{sec:equi-degree}
For an Euclidean space $V$ we denote by $B(V)$ the open unit ball in $V$, and for $x$, $y\in V:=\br^n$ we will denote by $x\bullet y $ the standard inner product in $V$.

We assume that $G$ stands for a compact Lie group and that all considered subgroups of
$G$ are  closed. For a subgroup $H\le G$,
 $N\left(  H\right)  $ stands for  the {\it normalizer} of $H$ in $G$, and  $W\left(
H\right)  =N\left(  H\right)  /H$ denotes  the {\it Weyl group} of $H$ in $G$. We denote by
$\left(  H\right)  $ the {\it conjugacy} class of $H$ in $G$ and use the notations $
\Phi\left(  G\right)     :=\left\{  \left(  H\right)  :H\;\;\text{is a
subgroup of }\;G\right\}$ and 
$\Phi_{n}\left(  G\right)     :=\left\{  \left(  H\right)  \in\Phi\left(
G\right)  :\text{\textrm{dim\thinspace}}W\left(  H\right)  =n\right\}$.
The set $\Phi\left(  G\right)  $ has a natural partial order given by: 
$\left(  H\right)  \leq\left(  K\right)  \;\;\Longleftrightarrow\;\;\exists
_{g\in G}\;\;gHg^{-1}\le K$.

For a $G$-space $X$ and $x\in X$, we put
$
G_{x}:=\left\{  g\in G:\;gx=x\right\}$ to denote
the {\it isotropy group} of $x$, $G\left(  x\right)  :=\left\{  gx:\;g\in G\right\}$ to denote 
the orbit of $x$, and the conjugacy class $\left(  G_{x}\right)  :=\left\{  H\subset G:\;\exists_{g\in G}\;\;G_{x}%
=g^{-1}Hg\right\}$ will be called the orbit type 
 of $x$, and $\Phi(G;X):=\{(G_x): x\in X\}$ will stand for the set of  all the orbit types in $X$. We also put $\Phi_n(G;X):=\Phi(G;X)\cap \Phi_n(G)$.
 
For a subgroup $H\le G$, we write
$X^{H}:=\left\{  x\in X:\;G_{x}\ge H\right\}$
to denote the {\it $H$-fixed point space} of $H$. The orbit space for a $G$-space $X$ will be
denoted by $X/G$.

As  any compact Lie group admits only countably many
non-equivalent real irreducible representations, given a
compact Lie group $G $, we will assume that we have a complete list of all real
 irreducible representations, denoted $\mathcal{V}_{i}$, $i=0 $, $1$, $\ldots $, which could also be identified by the {\it character list} $\{\chi_i\}$. We refer to \cite{BaKr06} for examples of such lists and the related notation. 
\vs

Any finite-dimensional real  $G$-representation $V$ can be
decomposed
into the direct sum of $G $-invariant subspaces
\begin{equation}
V=V_{0}\oplus V_{1}\oplus \dots \oplus V_{r}\text{,}  \label{eq:Giso}
\end{equation}%
called the $G$\textit{-}{\it isotypical decomposition of }$V$, where each isotypical component $V_{i}$  is \emph{modeled} on the irreducible $G$-representation $\mathcal{V}_{i}$, $i=0,$ $1,$ $\dots ,$ $r$,  i.e., $V_{i}$ contains all the irreducible subrepresentations of $V$ which are equivalent to $\mathcal{V}_{i}$.
\vs
Let $V$ be a $G$-representation, $\Omega\subset V$ an open $G$-invariant bounded set and $f:V\to V$ a continuous $G$-equivariant map such that for all $x\in \partial \Omega$ we have $f(x)\not=0$. Then we say that $f$ is an {\it $\Omega$-admissible} $G$-map and we call $(f,\Omega)$ an {\it admissible $G$-pair}. The set of all admissible $G$-pairs in $V$  will be denoted by $\mathcal M^G(V)$. We also put $\mathcal M^G:=\bigcup_{V}\mathcal M^G(V)$ (here $V$ denotes all possible $G$-representations) to denote the set of all admissible $G$-pair. A map $f:V\to V$ is called a {\it gradient map} if there exists a continuously differentiable $\vp:V\to \br$ such that $f=\nabla \vp$. We denote by $\mathcal M_\nabla^G(V)$ the subset of $\mathcal M^G(V)$ consisting of all gradient maps and we define $\mathcal M_\nabla^G:=\bigcup_{V}\mathcal M_\nabla^G(V)$. In the set $\mathcal M^G(V)$ (resp. $\mathcal M_\nabla^G(V)$) we have the so-called  {\it admissible homotopy} (resp. {\it admissible gradient homotopy}) relation between $(f_0,\Omega)$ and $(f_1,\Omega)$, i.e., if $f_1$ and $f_2$ are homotopic by an homotopy $h:[0,1]\times V\to V$, such that $h_t$ belongs to $\mathcal M^G(V)$ (resp. $\mathcal M_\nabla^G(V)$) for every $t\in [0,1]$.

\vs
\subsection{Euler and Burnside Rings}

The concept of the {\it Euler ring} was introduced by T. tom Dieck in \cite{tD}. Due to its topological nature, computations of  the Euler ring $U(G)$, for a general compact group $G$,  may be quite complicated.   However, in our case of interest, when $G:=\Gamma\times O(2)$ with  $\Gamma$ being  a finite group, the Euler ring structure in $U(G)$ can be effectively described  by using elementary  techniques based on the reduction techniques and the properties of the Euler ring homomorphisms (see \cite{DaKr} for more details). 

Let us recall the definition of the Euler ring $U(G)$. As a $\bz$-module, $U(G)$ is the free $\bz$-module generated by $\Phi(G)$, i.e. $U\left(  G\right)  :={\mathbb{Z}}\left[  \Phi\left(  G\right)  \right] $. 
The 
ring multiplication is defined on $U(G)$ on generators $\left(  H\right) $, $\left(  K\right)
\in\Phi\left(  G\right)  $ by
\begin{equation}
\left(  H\right)  \ast\left(  K\right)  =\sum_{\left(  L\right)  \in
\Phi\left(  G\right)  }n_{L}\left(  L\right)  ,\label{eq:Euler-mult}%
\end{equation}
where
\begin{equation}
n_{L}:=\chi_{c}\left(  \left(  G/H\times G/K\right)  _{L}/N\left(  L\right)
\right) \label{eq:Euler-coeff}%
\end{equation}
with $\chi_{c}$ the Euler characteristic taken in Alexander-Spanier cohomology
with compact support (cf. \cite{Spa}). We refer to  \cite{BtD} for more details.

The ${\mathbb{Z}}$-module $A\left(  G\right)  :={\mathbb{Z}}\left[  \Phi
_{0}\left(  G\right)  \right]  $ equipped with the same  multiplication as in
$U\left(  G\right)$ but restricted to generators from $\Phi_{0}\left(
G\right)  $ is called \emph{Burnside ring}, i.e.,
\[
\left(  H\right)  \cdot\left(  K\right)  =\sum_{\left(  L\right)  }%
n_{L}\left(  L\right)  ,\qquad\left(  H\right) ,\,\left(  K\right)
,\,\left(  L\right)  \in\Phi_{0}\left(  G\right)  ,
\]
where $n_L$  stands for the number of $(L)$ orbits in $G/H\times G/K$, i.e.  $n_{L}:=\left(  \left(  G/H\times G/K\right)  _{L}/N\left(  L\right)
\right)  =\left\vert \left(  G/H\times G/K\right)  _{L}/N\left(L\right)
\right\vert $ (here $|X|$ stands for the number of elements in the set $X$). We have the following recurrence formula 
{\small
\begin{equation}
n_{L}=\frac{n\left(  L,K\right)  \left\vert W\left(  K\right)  \right\vert
n\left(  L,\text{ }H\right)  \left\vert W\left(  H\right)  \right\vert
-{\displaystyle \sum_{\left(  \wt L\right)  >\left(  L\right)  }}n\left(  L,\wt L\right)
n_{\wt L}\left\vert W\left(  \wt L\right)  \right\vert }{\left\vert W\left(  L\right)
\right\vert }, \label{eq:rec-coef}%
\end{equation}}
where
\[
n(L,K)=\left\vert \frac{N(L,K)}{N(K)}\right\vert ,\quad N(L,K):=\{g\in
G:gLg^{-1}\subset K\},
\]
and $\left(  H\right)$, $\left(  K\right)$, $(  L)$,  $(
\wt L)  $ are taken from $\Phi_{0}\left(  G\right) $.

Clearly, the structure of the Burnside ring $A(G)$ is significantly simpler and can be effectively computed. It is also possible to implement the G.A.P. routines in computer programs evaluating Burnside rings products. Notice that $A\left(  G\right)  $ is a ${\mathbb{Z}}$-submodule of $U\left(
G\right)  $, but not a subring. However   (see \cite{BKR}), the projection  $\pi_{0}:U\left(  G\right)  \rightarrow
A\left(  G\right)  $ defined on generators $\left(  H\right)  \in\Phi\left(  G\right)
$ by
\begin{equation}\label{eq:pi0}
\pi_{0}\left(  \left(  H\right)  \right)  =%
\begin{cases}
\left(  H\right)  & \text{ if }\;\left(  H\right)  \in\Phi_{0}\left(
G\right)  ,\\
0 & \text{ otherwise,}%
\end{cases}
\end{equation}
is a ring homomorphism, i.e.,
\[
\pi_{0}\left(  \left(  H\right)  \ast\left(  K\right)  \right)  =\pi
_{0}\left(  \left(  H\right)  \right)  \cdot\pi_{0}\left(  \left(  K\right)
\right)  ,\qquad\left(  H\right)  ,\,\left(  K\right)  \in\Phi\left(
G\right)  ,
\]
where `$\cdot$' denotes the multiplication in the Burnside ring $A(G)$.
The homomorphism $\pi_0$ allows to identify the  Burnside ring
$A\left(  G\right)  $ as a part of  the
Euler ring  $U\left(  G\right)$ and with the help of additional algorithms,  the Euler ring structure for $G=\Gamma\times O(2)$ can be completely computed by elementary means (cf.  \cite{DaKr}).

\subsection{Equivariant Gradient Degree}

The existence and properties of the so-called $G$-equivariant gradient degree are presented in the following result from \cite{Geba}:
\begin{theorem}
\label{thm:Ggrad-properties} There exists a unique map $\nabla_{G}\text{\rm -deg\,}:\mathcal{M}_{\nabla}^{G}\rightarrow U(G)$,
which assigns to every $(\nabla\varphi,\Omega)\in\mathcal{M}_{\nabla}^{G}$ an
element $\nabla_{G}\text{\rm -deg\,}(\nabla\varphi,\Omega)\in
U(G)$, called the $G$\textit{-gradient degree} of $\nabla\varphi$ on $\Omega
$,
\begin{equation}
\nabla_{G}\text{\rm -deg\,}(\nabla\varphi,\Omega)=\sum
_{(H)\in\Phi(G)}n_{H}(H_{i})=n_{H_{1}}(H_{1})+\dots+n_{H_{m}%
}(H_{m}), \label{eq:grad-deg}%
\end{equation}
satisfying the following properties:

\begin{description}
\item \textbf{(Existence)} If $\nabla_{G}\text{\rm -deg\,}
(\nabla\varphi,\Omega)\not =0$, i.e. there is in \eqref{eq:grad-deg} a
non-zero coefficient $n_{H_{i}}$, then exists $u_{0}\in\Omega$ such that
$\nabla\varphi(u_{0})=0$ and $(G_{u_{0}})\geq(H_{i})$.

\item \textbf{(Additivity)} Let $\Omega_{1}$ and $\Omega_{2}$ be two disjoint
open $G$-invariant subsets of $\Omega$ such that $(\nabla\varphi)^{-1}%
(0)\cap\Omega\subset\Omega_{1}\cup\Omega_{2}.$ Then, $\nabla_{G}\text{\rm -deg\,}(\nabla\varphi,\Omega)=\nabla_{G}\text{\rm -deg\,}(\nabla\varphi,\Omega_{1})+\nabla_{G}\text{\rm -deg\,}(\nabla\varphi,\Omega_{2}).$

\item \textbf{(Homotopy)} If $\nabla_{x}\psi:[0,1]\times V\rightarrow V$ is a
$G$-gradient $\Omega$-admissible homotopy, then
\[
\nabla_{G}\text{\rm -deg\,}(\nabla_{x}\psi,\Omega
)=\text{\textit{constant}}.
\]

\item \textbf{(Normalization)} Let $\varphi\in C_{G}^{2}(V,\mathbb{R})$ be a
special $\Omega$-Morse function (cf. \cite{Geba}) such that $(\nabla\varphi)^{-1}(0)\cap
\Omega=G(u_{0})$ and $G_{u_{0}}=H$. Then,
\[
\nabla_{G}\text{\rm -deg\,}(\nabla\varphi,\Omega
)=(-1)^{\mathrm{m}^{-}(\nabla^{2}\varphi(u_{0}))}\cdot(H),
\]
where \textquotedblleft$\mathrm{m}^{-}(\cdot)$\textquotedblright\ stands for
the total dimension of all the eigenspaces corresponding to negative eigenvalues of a (symmetric) matrix.

\item \textbf{(Multiplicativity)} For all $(\nabla\varphi_{1},\Omega_{1})$,
$(\nabla\varphi_{2},\Omega_{2})\in\mathcal{M}_{\nabla}^{G}$,
\[
\nabla_{G}\text{\rm -deg\,}(\nabla\varphi_{1}\times\nabla
\varphi_{2},\Omega_{1}\times\Omega_{2})=\nabla_{G}\text{\rm -deg\,}(\nabla\varphi_{1},\Omega_{1})\ast\nabla_{G}\text{\textrm{-deg\thinspace}%
}(\nabla\varphi_{2},\Omega_{2})
\]
where the multiplication `$\ast$' is taken in the Euler ring $U(G)$.

\item \textbf{(Functoriality)}(cf. \cite{DaKr}) Suppose $G_o\le G$ is a subgroup of a compact Lie group $G$ such that $\text{\rm dim\,} G_0=\text{\rm dim\,} G$. Then any gradient admissible $G$-pair $(\nabla \vp,\Omega)$ is also an admissible $G_0$-pair and we have 
\[
\Psi\left[   \nabla_{G}\text{\rm -deg\,}(\nabla\varphi,\Omega).  \right]  = \nabla_{G_0}\text{\rm -deg\,}(\nabla\varphi,\Omega),
\]
where $\Psi:U(G)\to U(G_0)$ is the Euler ring homomorphism induced by the inclusion $\psi :G_0 \hookrightarrow G$ (see \cite{BKR}).

\end{description}
\end{theorem}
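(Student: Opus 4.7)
The plan is to construct $\nabla_G\text{-deg}$ on a dense subclass of $\mathcal{M}_\nabla^G$ (special $\Omega$-Morse functions), use the Normalization property to \emph{define} the degree there, and then extend it to all of $\mathcal{M}_\nabla^G$ by admissible gradient homotopies. Uniqueness will then come essentially for free: any map satisfying Normalization, Additivity, and Homotopy is forced to take the prescribed value on Morse functions and hence (by density) on all admissible gradient pairs.

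First, I would establish the \textbf{Morse-theoretic building block}. For any $(\nabla\varphi,\Omega)\in\mathcal{M}_\nabla^G$, an equivariant transversality/approximation argument (cf.\ Wasserman's equivariant transversality in the gradient setting, as used in \cite{Geba}) produces a special $\Omega$-Morse function $\psi$ that is admissibly gradient-homotopic to $\varphi$, whose critical set in $\Omega$ consists of finitely many isolated orbits $G(u_1),\dots,G(u_s)$ with isotropies $H_k:=G_{u_k}$. I then \emph{define}
\[
\nabla_G\text{-deg}(\nabla\varphi,\Omega):=\sum_{k=1}^{s}(-1)^{\mathrm{m}^-(\nabla^2\psi(u_k)|_{S_k})}\,(H_k)\in U(G),
\]
where $S_k$ is the slice to $G(u_k)$ at $u_k$. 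The critical step is to show this sum is independent of the Morse approximation chosen: given two such $\psi_0$ and $\psi_1$, link them by a generic path through special $\Omega$-Morse functions except for a finite set of non-degenerate bifurcation times, and verify that at each such time the Morse-index contribution changes by a cancelling pair of orbits with opposite signs (birth/death lemma); combined with the equivariant handle moves this yields a well-defined element of $U(G)$.

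Next, I would verify the six properties in order. \textbf{Existence} is immediate from the sum formula, since a nonzero coefficient forces a critical orbit whose isotropy dominates the indexing subgroup. \textbf{Additivity} follows by choosing a Morse approximation whose zero set splits between $\Omega_1$ and $\Omega_2$. \textbf{Homotopy} holds by construction (any admissible gradient homotopy can itself be generically perturbed into a path of Morse functions). \textbf{Normalization} is just the one-orbit case of the definition. For \textbf{Multiplicativity}, I would take independent Morse approximations $\psi_1$, $\psi_2$ on the factors; the critical orbits of $\nabla\psi_1\times\nabla\psi_2$ in $\Omega_1\times\Omega_2$ are products $G(u_i)\times G(v_j)$ with isotropy $H_i\cap K_j$ and Morse index $\mathrm{m}^-_i+\mathrm{m}^-_j$; the combinatorics of the orbit space $(G/H_i\times G/K_j)/G$ reproduce exactly the coefficients \eqref{eq:Euler-coeff} in the Euler-ring product $(H_i)\ast(K_j)$. \textbf{Functoriality} is verified by observing that a special $G$-Morse function is also a special $G_0$-Morse function whenever $\dim G_0=\dim G$ (the slice decomposition is unchanged), and that the forgetful map on orbit types coincides with the Euler-ring homomorphism $\Psi$ induced by inclusion.

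Finally, \textbf{uniqueness}: suppose $D_1,D_2:\mathcal{M}_\nabla^G\to U(G)$ both satisfy the listed axioms. By Additivity we may localize to an isolating neighborhood of a single critical orbit; by Homotopy we may replace the map there by a Morse model; and on such a Morse model the value is pinned down by Normalization. Hence $D_1=D_2$. The main obstacle is the well-definedness in the construction step, namely controlling the generic codimension-one singularities of a path between two special $\Omega$-Morse functions and checking that all contributions to the Euler-ring sum cancel; this is where the equivariant birth/death analysis and a careful use of the slice representation do the real work.
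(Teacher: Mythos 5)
The first thing to note is that the paper does not prove this theorem at all: it is quoted from \cite{Geba} as a known result (with the functoriality property attributed to \cite{DaKr}), so there is no in-paper argument to measure your proposal against. Judged on its own terms, your outline follows the standard construction --- define the degree on special $\Omega$-Morse functions by the Normalization formula, extend to all of $\mathcal{M}_{\nabla}^{G}$ by admissible gradient homotopy, and derive uniqueness from density of the Morse models together with Additivity, Homotopy and Normalization --- and your uniqueness paragraph is essentially the correct argument \emph{once} existence and the approximation step are granted.

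The genuine gap is that the two load-bearing steps are asserted rather than proved, and the first would not go through as you state it. (a) Well-definedness: in the equivariant gradient category one cannot simply take ``a generic path through special $\Omega$-Morse functions with finitely many non-degenerate bifurcation times'' and cancel birth/death pairs. Equivariant transversality is obstructed for general compact Lie groups; critical orbits occur along the orbit-type stratification in families of different dimensions, so the relevant singularities of one-parameter families are not only quadratic birth/death points but also transitions between orbits of distinct types, where the contribution is governed by the slice representation and by $W(H)$, and the cancellation must be organized inductively over the partially ordered set $\Phi(G)$. This is precisely the content of \cite{Geba} and of the homotopy classification in \cite{DGR}; invoking a ``birth/death lemma'' hides the entire difficulty. (b) Multiplicativity: $G(u_i)\times G(v_j)$ with the diagonal $G$-action is not a single orbit and is in general a degenerate critical manifold of $\nabla\psi_1\times\nabla\psi_2$, so the product of two special $\Omega$-Morse functions is not itself special $\Omega$-Morse; one must perturb further along the orbit-type strata of $G/H_i\times G/K_j$, and the resulting coefficients are the Euler characteristics $\chi_c$ of the strata modulo $N(L)$ as in \eqref{eq:Euler-coeff}, which coincide with a naive orbit count only for $(L)\in\Phi_0(G)$. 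Your sketch names the right objects in both places, but supplies neither argument, and these are exactly the points where the theorem is hard.
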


Using a standard finite-dimensional approximation scheme, the $G$-equivariant gradient degree can be extended to  admissible $G$-pairs in Hilbert $G$-representation. To be more precise, consider a Hilbert $G$-representation $\mathscr H$, a   $G$-equivariant completely continuous gradient field $\nabla f:\mathscr H\to \mathscr H$ and  an open bounded $G$-invariant set $\Omega\subset \mathscr H$, such that  $\nabla f$ is $\Omega$-admissible. Then the  pair $(\nabla f,\Omega)$ is called  a {\it $G$-admissible pair} in $\mathscr H$.  This degree admits  the same properties as those listed in Theorem \ref{thm:Ggrad-properties} (cf. \cite{survey,DaKr}).

One of the most important properties of $G$-equivariant gradient degree $\nabla_G\text{-deg}(\nabla f,\Omega)$ is that it provides a full equivariant topological classification of the solution set  for $\nabla f(x)=0$ and $x\in \Omega$. More precisely, in addition to the properties listed in Theorem \ref{thm:Ggrad-properties}, the equivariant gradient degree has also the so-called {\it Universality Property}, which says that two $B(V)$-admissible $G$-equivariant gradient maps $\nabla f_1$, $\nabla f_2:V\to V$ have the same gradient degrees if and only if they are $B(V)$-admissibly gradient homotopic.

Suppose that 
\[\nabla_G\text{-deg}(\nabla f,\Omega)=n_1(H_1)+n_2(H_2)+\dots +n_k(H_k)+\dots +n_m(H_m),
\]
and $n_k\not=0$. Then, by the existence property,  there exists
a solution $x_o\in \Omega$ of $\nabla f(x)=0$, such that $H:=G_{x_o}$. In addition, if $(H_k)$ is a maximal orbit type in $\Omega$, then for any $\Omega$-admissible continuous deformation $\{\nabla f_t\}_{t\in [0,1]}$ (in the class of gradient maps) we obtain a continuum of solutions in $\Omega$ to $\nabla f_t(x)=0$ that starts at $x_0$ for $t=0$ and ends at $x_1$ for $t=1$, and $G_{x_1}=H$. This property is called {\it Continuation Property}.

\subsection{Degree on the Slice}
Let $\mathscr H$ be a Hilbert $G$-representation.
Suppose that the orbit $G(u_{o})$ of $u_{o}\in\mathscr H$ is contained in a
finite-dimensional $G$-invariant subspace, so the $G$-action on that subspace
is smooth and $G(u_{o})$ is a smooth submanifold of $\mathscr H$. In such a case we  call the orbit $G(u_o)$ {\it finite-dimensional}. Denote by
$S_{o}\subset\mathscr H$ the slice to the orbit $G(u_{o})$ at $u_{o}$. Denote
by $V_{o}:=\tau_{u_{o}}G(u_{o})$ the tangent space to $G(u_{o})$ at $u_{o}$. Then
$S_{o}=V_{o}^{\perp}$ and $S_{o}$ is a smooth Hilbert $G_{u_{o}}$-representation.

Then we have (cf. \cite{BeKr})

\begin{theorem}
{\smc(Slice Principle)} \label{thm:SCP} Let $\mathscr{H}$ be a Hilbert
$G$-representation, $\Omega$ an open $G$-invariant subset in $\mathscr H$,  and $\varphi:\Omega\rightarrow\mathbb{R}$ a
continuously differentiable $G$-invariant functional such that $\nabla \vp$ is a completely continuous field. Suppose that  $u_{o}\in\Omega$  and  
$G(u_{o})$ is an finite-dimensional isolated critical orbit of $\varphi$ with  $S_{o}$ being the slice to the orbit $G(u_{o})$  at $u_o$, and $\mathcal{U}$ an isolated
tubular neighborhood of $G(u_{o})$. Put $\varphi_{o}:S_{o}\rightarrow
\mathbb{R}$ by $\varphi_{o}(v):=\varphi(u_{o}+v)$, $v\in S_{o}$. Then
\begin{equation}
\nabla_{G}\text{\rm -deg\,}(\nabla\varphi,\mathcal{U})=\Theta
(\nabla_{G_{u_{o}}}\text{\rm -deg\,}(\nabla\varphi_{o},\mathcal{U}\cap
S_{o})), \label{eq:SDP}%
\end{equation}
where $\Theta:U(G_{u_{o}})\rightarrow U(G)$ is homomorphism  defined on generators
$\Theta(H)=(H)$, $(H)\in\Phi(G_{u_{o}})$.
\end{theorem}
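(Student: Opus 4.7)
The plan is to reduce both sides of \eqref{eq:SDP} to the same Morse-theoretic normal form and then invoke the Normalization property of the gradient equivariant degree.

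First, by the equivariant tubular neighborhood theorem applied to the finite-dimensional compact submanifold $G(u_o)\subset\mathscr H$, after possibly shrinking $\mathcal U$ to a smaller isolating $G$-neighborhood of $G(u_o)$ every $w\in\mathcal U$ has a unique representation $w=g\cdot(u_o+v)$ with $v$ in a small open $G_{u_o}$-invariant ball $B\subset S_o$ and $g$ in a slice of $G_{u_o}$ in $G$. By $G$-invariance this gives $\varphi(w)=\varphi_o(v)$, so every $G_{u_o}$-invariant function on $B$ extends uniquely to a $G$-invariant function on $\mathcal U$; similarly every $G_{u_o}$-equivariant gradient homotopy on $B$ lifts to a $G$-equivariant gradient homotopy on $\mathcal U$, and critical points correspond exactly: $w=g\cdot(u_o+v)$ is critical for the lifted function if and only if $v$ is critical for the slice function, because the orbit-direction component of the gradient vanishes by $G$-invariance.

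Second, using the complete continuity of $\nabla\varphi$ to perform a $G_{u_o}$-invariant finite-dimensional Galerkin reduction on $S_o$, together with the standard density of special Morse functions among $G_{u_o}$-invariant $C^2$-functionals on $B$, I will construct a $G_{u_o}$-equivariant gradient homotopy $\{\varphi_{t,o}\}_{t\in[0,1]}$ joining $\varphi_o$ to a special $B$-Morse function $\widetilde\varphi_o$ whose only critical orbit is $\{0\}$ with isotropy $G_{u_o}$; the homotopy is $B$-admissible in the gradient class. Lifting via $\varphi_t(g\cdot(u_o+v)):=\varphi_{t,o}(v)$ produces a $\mathcal U$-admissible $G$-equivariant gradient homotopy from $\varphi$ to a special $\mathcal U$-Morse function $\widetilde\varphi$ whose only critical orbit is $G(u_o)$. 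The Homotopy property of Theorem \ref{thm:Ggrad-properties} then reduces the identity \eqref{eq:SDP} to the Morse pair $(\widetilde\varphi,\widetilde\varphi_o)$.

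Finally, the Normalization property applied on both sides yields
\[
\nabla_{G_{u_o}}\text{-deg}(\nabla\widetilde\varphi_o,B)=(-1)^{m^-}(G_{u_o}),\qquad \nabla_G\text{-deg}(\nabla\widetilde\varphi,\mathcal U)=(-1)^{m^-}(G_{u_o}),
\]
with the same negative Morse index $m^-$ because, by construction, the Hessian of $\widetilde\varphi$ restricted to the slice at $u_o$ coincides with $\nabla^2\widetilde\varphi_o(0)$. Since $\Theta$ is $\mathbb Z$-linear and sends the generator $(G_{u_o})\in U(G_{u_o})$ to $(G_{u_o})\in U(G)$ by definition, the identity \eqref{eq:SDP} follows for $\widetilde\varphi$ and hence for $\varphi$. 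The main obstacle is the admissibility argument in the second step: verifying that the gradient homotopy lifted from the slice does not develop critical orbits reaching $\partial\mathcal U$, and that in the Hilbert setting the finite-dimensional Morse approximation on $S_o$ is genuinely compatible with the complete-continuity structure of $\nabla\varphi$. Both points rest on the precise product structure furnished by the equivariant tubular neighborhood theorem, which is why $\mathcal U$ and $B$ must be shrunk simultaneously.
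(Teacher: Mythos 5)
First, note that the paper does not actually prove Theorem \ref{thm:SCP}: it is quoted with the citation ``(cf.\ \cite{BeKr})'', so there is no in-paper argument to compare yours against. Your overall strategy (equivariant tubular neighborhood, reduction of $\varphi$ to a $G_{u_o}$-invariant function on the slice ball $B\subset S_o$, finite-dimensional Galerkin reduction using complete continuity, deformation to a Morse-theoretic normal form, and then Normalization plus $\Theta$) is the standard route for such slice principles, and the first step --- that critical orbits of the lifted function in $\mathcal U$ correspond bijectively to critical $G_{u_o}$-orbits of $\varphi_o$ in $B$, with $G_{g(u_o+v)}=g(G_{u_o})_v g^{-1}$ --- is correct (modulo the remark that the representation $w=g\cdot(u_o+v)$ is unique only up to the $G_{u_o}$-action, i.e.\ $\mathcal U\cong G\times_{G_{u_o}}B$ is a twisted product, which does not harm the lifting of $G_{u_o}$-invariant data).

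However, your final step contains a genuine error. You claim to deform $\varphi_o$ to a special $B$-Morse function $\widetilde\varphi_o$ \emph{whose only critical orbit is $\{0\}$ with isotropy $G_{u_o}$}, and then conclude from Normalization that both degrees equal the single generator $(-1)^{m^-}(G_{u_o})$. This cannot be right in general: the gradient equivariant degree of an isolated (even nondegenerate) critical orbit is typically a long sum of orbit types, not a single generator. The paper's own computations show this --- $\nabla_{G_{u_o}}\text{-deg}(\mathscr A_{\lambda_\pm},\mathscr U\cap \mathscr S_o)$ is a product of basic degrees $\nabla\text{-deg}_{\mathcal V_{n_j,l}}$, each of which (see Appendix \ref{sec:basic}) contains many orbit types besides $(I\times O(2))$, even though the underlying critical orbit is isolated and the Hessian is an isomorphism. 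The point you are missing is that a nondegenerate critical point of an invariant functional is \emph{not} automatically a special Morse function in G\c{e}ba's sense; making it special forces the appearance of additional critical orbits with strictly smaller isotropy groups (this is precisely why $\nabla\text{-deg}_{\mathcal V_i}\neq(G)$ for nontrivial representations). The correct argument must allow $\widetilde\varphi_o$ to have finitely many critical orbits $G_{u_o}(v_j)$ in $B$, match each with the critical orbit $G(u_o+v_j)$ of the lifted function, verify $\Theta\bigl(((G_{u_o})_{v_j})\bigr)=(G_{u_o+v_j})$ via the slice theorem, and check that the negative Morse index of $\widetilde\varphi_o$ at $v_j$ in the slice to $G_{u_o}(v_j)$ inside $S_o$ equals that of the lifted function at $u_o+v_j$ in the slice to $G(u_o+v_j)$ inside $\mathscr H$; only then does summing $(-1)^{m^-_j}$ over all $j$ give \eqref{eq:SDP}. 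As written, your proof would force both sides of \eqref{eq:SDP} to be $\pm(G_{u_o})$, which contradicts the theorem's intended use throughout the paper.
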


\subsection{$G$-Equivariant Gradient Degree of Linear Maps}

Let us establish a computational formula to evaluate the $G$-equivariant degree   $\nabla_{G}\text{-deg\thinspace}(\mathscr
A,B(V))$, where\textrm{ }$\mathscr A:V\rightarrow V$ is a symmetric $G
$-equivariant linear isomorphism and $V$ is an orthogonal $G$-representation,
i.e., $\mathscr A=\nabla\varphi$ for $\varphi(v)=\frac{1}{2}(\mathscr Av\bullet
v)$, $v\in V$. 
Consider the $G$-isotypical decomposition \eqref{eq:Giso}
of $V$ and put
\[
\mathscr A_{i}:=\mathscr A|_{V_{i}}:V_{i}\rightarrow V_{i},\quad
i=0,1,\dots,r.
\]
Then, by the multiplicativity property,
\begin{equation}
\nabla_{G}\mbox{-deg}(\mathscr A,B(V))=\prod_{i}^{r}\nabla_{G
}\mbox{-deg}(\mathscr A_{i},B(V_{i})) \label{eq:deg-Lin-decoGrad}%
\end{equation}
Take $\xi\in\sigma_{-}(\mathscr A)$, where $\sigma_{-}(\mathscr A)$ stands for
the negative spectrum of $\mathscr A$, and consider the corresponding
eigenspace $E(\xi):=\ker(\mathscr A-\xi\mbox{Id})$. Define the numbers
$m_{i}(\xi)$ by
\begin{equation}
m_{i}(\xi):=\dim\left(  E(\xi)\cap V_{i}\right)  /\dim\mathcal{V}_{i},
\label{eq:m_j(mu)-gra}%
\end{equation}
and the so-called {\it gradient $G$-equivariant basic degrees} by
\begin{equation}
\nabla\text{-deg}_{\mathcal{V}_{i}}:=\nabla_{G}%
\mbox{-deg}(-\mbox{Id\,},B(\mathcal{V}_{i})). \label{eq:basicGrad-deg0}, \quad i=0,1,2,\dots%
\end{equation}
Then
\begin{equation}
\nabla_{G}\text{\textrm{-deg}}(\mathscr A,B(V))=\prod_{\xi\in\sigma
_{-}(\mathscr A)}\prod_{i=0}^{r}\left(  {\nabla\mbox{\rm -deg}_{\mathcal{V}_{i}}%
}\right)  ^{m_{i}(\xi)}. \label{eq:lin-GdegGrad}%
\end{equation}

\subsection{$\Gamma\times O(2)$-Equivariant Basic Degrees}\label{sec:grad-techniques}

In order to be able to effectively use the formula \eqref{eq:deg-Lin-decoGrad}, it is important to establish the exact values of the gradient $G$-equivariant basic degrees. A direct usage of topological  definition (see \cite{Geba})  of the gradient $G$-equivariant  degree to compute the basic degrees  $\nabla\text{-deg}_{\mathcal V_i}$ (given by \eqref{eq:basicGrad-deg0}),    may be very complicated for infinite compact Lie groups $G$. However, in the case of the group $G:=\Gamma\times O(2)$ ($\Gamma$ being a finite group), we have  effective reduction techniques (see \cite{DaKr,RR}), using  the  homomorphism $\pi_0$ and the  Euler ring homomorphism $\Psi: U(\Gamma\times O(2))\to U(\Gamma\times S^1)$, which allow to establish the exact values of the gradient $\Gamma\times O(2)$-equivariant basic degrees.

To be more precise, let us recall the {\it $G$-equivariant Brouwer degree}\break
  $\gdeg( f,\Omega)\in A(G)$, which   is defined for admissible $G$-pairs $(f,\Omega)\in \mathcal M^G(V)$ and has similar existence, additivity, homotopy and multiplicativity properties  as the gradient degree. It can  be computed by applying the following recurrence formula to  the usual Brouwer degrees of maps $f^H:V^H\to V^H$, $(H)\in \Phi_0(G;V)$, i.e.,
\[  
\gdeg(f,\Omega)=\sum_{(H)\in \Phi_0(G;V)} n_H(H),
\]
and 
\begin{equation}\label{eq:rec-Brouwer}
n_H=\frac{\deg(f^H,\Omega^H)-\sum_{(L)>(H)} n_L \,n(H,L) \,|W(L)|}{|W(H)|}.
\end{equation}

In addition, for any gradient admissible $G$-pair $(\nabla\vp,\Omega)$, the  $G$-equivariant Brouwer  degree $\gdeg(\nabla \vp,\Omega)\in A(G)$ is well-defined  and we have  the following relation (see \cite{DaKr})
\begin{equation}\label{eq:degs}
\pi_0\left( \nabla_G\text{-deg}(\nabla \vp, \Omega)  \right)=\gdeg(\nabla \vp,\Omega).
\end{equation}
Moreover, the {\it  Brouwer $G$-equivariant basic degrees}
\[
\deg_{\mathcal V_i}:=\gdeg(-\id, B(\mathcal V_i)), \quad i=0,1,2,3,\dots.
\]
satisfy
\begin{equation}\label{eq:basic-pi0}
\deg_{\mathcal V_i}=\pi_0\left[ \nabla\text{-deg}_{\mathcal V_i}    \right], \quad i=0,1,2,3\dots
\end{equation}
Therefore, by \eqref{eq:basic-pi0},
\[ \nabla\text{-deg}_{\mathcal V_i}= \deg_{\mathcal V_i}+\sum_{(H)\in \Phi_1(G;\mathcal V_i)} x_H (H),\]
with the integers $x_H$ that need to be  determined by other means. 

Since the gradient  basic degrees  $\wt{\deg}_{\mathcal V_i}$ for the group $\Gamma\times S^1$ are well-known, one can apply the Euler ring homomorphism $\Psi:U(\Gamma\times O(2))\to U(\Gamma\times S^1)$ to determine these coefficients (see \cite{DaKr}) by using the relation 
\[
\wt{\deg}_{\mathcal V_i}=\Psi\left[ \deg_{\mathcal V_i}  \right]  +\sum_{(H)\in \Phi_1(G;\mathcal V_i)} x_H\Psi (H),
\] 
(here we assume that $S^1$ acts nontrivially on $\mathcal V_i$). The Euler ring homomorphism $\Psi: U(\Gamma\times O(2))\to U(\Gamma \times S^1)$ is defined on the generators by
\begin{align}
\Psi(H)&=
\begin{cases}
2(K) &\text{ if }\;\; K=H \text{ and } K\sim K' \text{ in } \Gamma\times SO(2),\\
(K)+(K')   &\text{ if }\;\; K=H \text{ and } K\not\sim K' \text{ in } \Gamma\times SO(2),\\
(K)  &\text{ if }\;\; K\not=H,
\end{cases}\label{eq:Euler-hom}
\end{align}
where $K:=H\cap \Gamma\times SO(2)$, $K':=\kappa H\kappa \cap \Gamma \times SO(2)$.

The formula \eqref{eq:rec-Brouwer} allows the usage of computational programs based on G.A.P. platform to obtain exact symbolic evaluation of the $G$-equivariant Brouwer degree of linear isomorphisms for a large class of classical groups and their products (see \cite{Pin}).
\vs

\subsection{Gradient $I\times O(2)$-Equivariant Basic Degrees}\label{sec:basic}

We used
GAP programming (all the GAP routines are available for download at the website listed in \cite{Pin}) to classify all the conjugacy classes
of closed subgroups in $G:=I\times O(2)$. We also computed the following basic gradient 
degrees (corresponding to the irreducible $G$-representations associated to the characters listed in Table \ref{tab:I}), where we use red color to indicate the maximal orbit types (in the class of $2\pi$-periodic functions $x:\br\to V$ with Fourier mode $1$).

{\small
\begin{align*}
\eqdeg{\nabla}_{\cV_{1,1}}=\; &  -{\color{red}(\amal{{A_5^p}}{D_{1}}{}{}{})}%
+(\amal{{A_5^p}}{O(2)}{}{}{}),\\
\eqdeg{\nabla}_{\cV_{2,1}}=\; &
-{\color{red}(\amal{{D_3^p}}{D_{2}}{\bz_{2}}{{\bz_3^p}}{})}%
-{\color{red}(\amal{{V_4^p}}{D_{2}}{\bz_{2}}{{\bz_2^p}}{})}%
+2(\amal{{\bz_2^p}}{D_{2}}{\bz_{2}}{{\bz_1^p}}{})-{\color{red}%
(\amal{{A_4^p}}{D_{1}}{}{}{})}\\
&  -{\color{red}
(\amal{{D_3^p}}{D_{1}}{}{}{})}+2(\amal{{\bz_3^p}}{D_{1}}{}{}{})+2(\amal{{\bz_2^p}}{D_{1}}{}{}{})-2(\amal{{\bz_1^p}}{D_{1}}{}{}{})\\
&  -{\color{red}(\amal{{D_5^p}}{D_{5}}{D_{5}}{{\bz_1^p}}{{1}})}%
-{\color{red}(\amal{{D_5^p}}{D_{5}}{D_{5}}{{\bz_1^p}}{{2}})}%
-{\color{red}(\amal{{D_3^p}}{D_{3}}{D_{3}}{{\bz_1^p}}{})}%
+(\amal{{V_4^p}}{D_{2}}{D_{2}}{{\bz_1^p}}{})\\
&
+(\amal{{D_3^p}}{D_{1}}{D_{1}}{{\bz_3^p}}{})+(\amal{{V_4^p}}{D_{1}}{D_{1}}{{\bz_2^p}}{})+(\amal{{A_5^p}}{O(2)}{}{}{})\\
&
-(\amal{\bz_3^p}{\bz_1}{}{}{})-(\amal{\bz_2^p}{\bz_1}{}{}{})+(\amal{\bz_1^p}{\bz_1}{}{}{}),\\
\eqdeg{\nabla}_{\cV_{3,1}}=\; &
-{\color{red}(\amal{{V_4^p}}{D_{2}}{\bz_{2}}{{\bz_2^p}}{})}%
+(\amal{{\bz_2^p}}{D_{2}}{\bz_{2}}{{\bz_1^p}}{})-{\color{red}
(\amal{{D_5^p}}{D_{1}}{}{}{})}-{\color{red}(\amal{{D_3^p}}{D_{1}}{}{}{})}\\
&
+2(\amal{{\bz_2^p}}{D_{1}}{}{}{})-(\amal{{\bz_1^p}}{D_{1}}{}{}{})-{\color{red}(\amal{{D_5^p}}{D_{5}}{D_{5}}{{\bz_1^p}}{{1}})}%
-{\color{red}(\amal{{D_5^p}}{D_{5}}{D_{5}}{{\bz_1^p}}{{2}})}\\
&
+(\amal{{V_4^p}}{D_{2}}{D_{2}}{{\bz_1^p}}{})+(\amal{{\bz_2^p}}{D_{1}}{D_{1}}{{\bz_1^p}}{})+(\amal{{A_5^p}}{O(2)}{}{}{})\\
&  -{\color{red}%
(\amal{A_4^p}{\bz_{3}}{\bz_3}{V_4^p}{})}-(\amal{\bz_2^p}{\bz_{2}}{\bz_2}{\bz_1^p}{}),\\
\eqdeg{\nabla}_{\cV_{4,1}}=\; &
-{\color{red}(\amal{{D_5^p}}{D_{2}}{\bz_{2}}{{\bz_5^p}}{})}%
-{\color{red}(\amal{{D_3^p}}{D_{2}}{\bz_{2}}{{\bz_3^p}}{})}%
-{\color{red}(\amal{{V_4^p}}{D_{2}}{\bz_{2}}{{\bz_2^p}}{})}%
+3(\amal{{\bz_2^p}}{D_{2}}{\bz_{2}}{{\bz_1^p}}{})\\
&
-(\amal{{\bz_1^p}}{D_{1}}{}{}{})-{\color{red}(\amal{{D_5^p}}{D_{5}}{D_{5}}{{\bz_1^p}}{{1}})}%
-{\color{red}(\amal{{D_3^p}}{D_{3}}{D_{3}}{{\bz_1^p}}{})}%
+(\amal{{V_4^p}}{D_{2}}{D_{2}}{{\bz_1^p}}{})\\
&
+(\amal{{\bz_2^p}}{D_{1}}{D_{1}}{{\bz_1^p}}{})+(\amal{{A_5^p}}{O(2)}{}{}{})  -(\amal{\bz_2^p}{\bz_{2}}{\bz_2}{\bz_1^p}{}),\\
\eqdeg{\nabla}_{\cV_{5,1}}=\; &
-{\color{red}(\amal{{D_5^p}}{D_{2}}{\bz_{2}}{{\bz_5^p}}{})}%
-{\color{red}(\amal{{D_3^p}}{D_{2}}{\bz_{2}}{{\bz_3^p}}{})}%
-{\color{red}(\amal{{V_4^p}}{D_{2}}{\bz_{2}}{{\bz_2^p}}{})}%
+3(\amal{{\bz_2^p}}{D_{2}}{\bz_{2}}{{\bz_1^p}}{})\\
&
-(\amal{{\bz_1^p}}{D_{1}}{}{}{})-{\color{red}(\amal{{D_5^p}}{D_{5}}{D_{5}}{{\bz_1^p}}{{2}})}%
-{\color{red}(\amal{{D_3^p}}{D_{3}}{D_{3}}{{\bz_1^p}}{})}%
+(\amal{{V_4^p}}{D_{2}}{D_{2}}{{\bz_1^p}}{})\\
&
+(\amal{{\bz_2^p}}{D_{1}}{D_{1}}{{\bz_1^p}}{})+(\amal{{A_5^p}}{O(2)}{}{}{})  -(\amal{\bz_2^p}{\bz_{2}}{\bz_2}{\bz_1^p}{})\\
\eqdeg{\nabla}_{\cV_{-1,1}}=\; &
-{\color{red}(\amal{{A_5^p}}{D_{2}}{\bz_{2}}{{A_5}}{})}%
+(\amal{{A_5^p}}{O(2)}{}{}{}),\\
\eqdeg{\nabla}_{\cV_{-2,1}}=\; &  -{\color{red}%
(\amal{{A_4^p}}{D_{2}}{\bz_{2}}{{A_4}}{})}-{\color{red}(\amal{{D_3^p}}{D_{2}}{\bz_{2}}{{D_3^z}}{})}%
-{\color{red}%
(\amal{{D_3^p}}{D_{2}}{\bz_{2}}{{D_3}}{})}-{\color{red}(\amal{{V_4^p}}{D_{2}}{\bz_{2}}{{V_4^z}}{})}%
\\
&
+2(\amal{{\bz_3^p}}{D_{2}}{\bz_{2}}{{\bz_3}}{})+2(\amal{{\bz_2^p}}{D_{2}}{\bz_{2}}{{\bz_2^z}}{})+2(\amal{{\bz_2^p}}{D_{2}}{\bz_{2}}{{\bz_2}}{})-2(\amal{{\bz_1^p}}{D_{2}}{\bz_{2}}{{\bz_1}}{})\\
&  -{\color{red}(\amal{{D_5^p}}{D_{10}}{D_{10}}{{\bz_1}}{{1}})}%
-{\color{red}(\amal{{D_5^p}}{D_{10}}{D_{10}}{{\bz_1}}{{2}})}%
-{\color{red}(\amal{{D_3^p}}{D_{6}}{D_{6}}{{\bz_1}}{})}%
+(\amal{{D_3^p}}{D_{2}}{D_{2}}{{\bz_3}}{{\bz_3^p}})\\
&
+(\amal{{V_4^p}}{D_{2}}{D_{2}}{{\bz_2^z}}{{\bz_2^p}})+(\amal{{V_4^p}}{D_{2}}{D_{2}}{{\bz_2}}{{\bz_2^p}})+(\amal{{A_5^p}}{O(2)}{}{}{})  +(\amal{\bz_1^p}{\bz_{2}}{\bz_2}{\bz_1}{}),\\
\eqdeg{\nabla}_{\cV_{-3,1}}=\; &  -{\color{red}%
(\amal{{D_5^p}}{D_{2}}{\bz_{2}}{{D_5}}{})}-{\color{red}%
(\amal{{D_3^p}}{D_{2}}{\bz_{2}}{{D_3}}{})}-{\color{red}(\amal{{V_4^p}}{D_{2}}{\bz_{2}}{{V_4^z}}{})}%
+(\amal{{\bz_2^p}}{D_{2}}{\bz_{2}}{{\bz_2^z}}{})\\
&
+2(\amal{{\bz_2^p}}{D_{2}}{\bz_{2}}{{\bz_2}}{})-(\amal{{\bz_1^p}}{D_{2}}{\bz_{2}}{{\bz_1}}{})-{\color{red}(\amal{{D_5^p}}{D_{10}}{D_{10}}{{\bz_1}}{{1}})}%
-{\color{red}(\amal{{D_5^p}}{D_{10}}{D_{10}}{{\bz_1}}{{2}})}\\
&
+(\amal{{V_4^p}}{D_{2}}{D_{2}}{{\bz_2^z}}{{\bz_2^p}})+(\amal{{\bz_2^p}}{D_{2}}{D_{2}}{{\bz_1}}{{\bz_1^p}})+(\amal{{A_5^p}}{O(2)}{}{}{})\\
&  -{\color{red}(\amal{A_4^p}{\bz_{6}}{\bz_6}{V_4}{})}%
-(\amal{\bz_2^p}{\bz_{2}}{\bz_2}{\bz_2^z}{}),
\end{align*}
\bigskip

\begin{align*}
\eqdeg{\nabla}_{\cV_{-4,1}}=\; &
-{\color{red}(\amal{{D_5^p}}{D_{2}}{\bz_{2}}{{D_5^z}}{})}%
-{\color{red}(\amal{{D_3^p}}{D_{2}}{\bz_{2}}{{D_3^z}}{})}%
-{\color{red}(\amal{{V_4^p}}{D_{2}}{\bz_{2}}{{V_4^z}}{})}%
+3(\amal{{\bz_2^p}}{D_{2}}{\bz_{2}}{{\bz_2^z}}{})\\
&
-(\amal{{\bz_1^p}}{D_{2}}{\bz_{2}}{{\bz_1}}{})-{\color{red}(\amal{{D_5^p}}{D_{10}}{D_{10}}{{\bz_1}}{{1}})}%
-{\color{red}(\amal{{D_3^p}}{D_{6}}{D_{6}}{{\bz_1}}{})}%
+(\amal{{V_4^p}}{D_{2}}{D_{2}}{{\bz_2^z}}{{\bz_2^p}})\\
&
+(\amal{{\bz_2^p}}{D_{2}}{D_{2}}{{\bz_1}}{{\bz_1^p}})+(\amal{{A_5^p}}{O(2)}{}{}{})  -(\amal{\bz_2^p}{\bz_{2}}{\bz_2}{\bz_2^z}{}),\\
\eqdeg{\nabla}_{\cV_{-5,1}}=\; &
-{\color{red}(\amal{{D_5^p}}{D_{2}}{\bz_{2}}{{D_5^z}}{})}%
-{\color{red}(\amal{{D_3^p}}{D_{2}}{\bz_{2}}{{D_3^z}}{})}%
-{\color{red}(\amal{{V_4^p}}{D_{2}}{\bz_{2}}{{V_4^z}}{})}%
+3(\amal{{\bz_2^p}}{D_{2}}{\bz_{2}}{{\bz_2^z}}{})\\
&
-(\amal{{\bz_1^p}}{D_{2}}{\bz_{2}}{{\bz_1}}{})-{\color{red}(\amal{{D_5^p}}{D_{10}}{D_{10}}{{\bz_1}}{{2}})}%
-{\color{red}(\amal{{D_3^p}}{D_{6}}{D_{6}}{{\bz_1}}{})}%
+(\amal{{V_4^p}}{D_{2}}{D_{2}}{{\bz_2^z}}{{\bz_2^p}})\\
&
+(\amal{{\bz_2^p}}{D_{2}}{D_{2}}{{\bz_1}}{{\bz_1^p}})+(\amal{{A_5^p}}{O(2)}{}{}{}) -(\amal{\bz_2^p}{\bz_{2}}{\bz_2}{\bz_2^z}{}),
\end{align*}
}

Let us point out that the computational algorithms for the basic degrees truncated to $A(I\times O(2))$ where established in \cite{DaKr}, and are now being effectively implemented into a computer software for equivariant gradient degree. Most of the maximal orbit types of periodic vibrations have finite Weyl groups, i.e., they are in the basic gradient degrees truncated to $A(I\times O(2))$. Nevertheless,  we have detected in the basic gradient degree of the representations $\mathcal{V}_{\pm3,1}$ a maximal group with Weyl group of dimension one. 

\bigskip
\noindent\textbf{Acknowledgement.}  C. Garc\'{\i}a was partially
supported by PAPIIT-UNAM through grant IA105217. W. Krawcewicz acknowledge partial support from National Science Foundation through grant DMS-1413223 and from National Science Foundation of China through grant no. 11871171.
\vs

\end{document}